\newtheorem{theorem}{Theorem}[section]
\newtheorem*{theoremA*}{Theorem A}
\newtheorem*{theoremB*}{Theorem B}
\newtheorem{lemma}[theorem]{Lemma}
\newtheorem{corollary}[theorem]{Corollary}
\newtheorem{example}{Example}
\newtheorem{question}{Question}
\newtheorem*{remark}{Remark}
\newtheorem*{remarks}{Remarks}
\numberwithin{equation}{section}
\def \isnatural {\in\mathbb{N}}
\def \iscomplex {\in\mathbb{C}}
\newcommand{\tef}{transcendental entire function}
\newcommand\qfor{\quad\text{for }}
\newcommand\sphere{\widehat{\mathbb{C}}}
\newcommand \C{\mathbb{C}}
\newcommand \N{\mathbb{N}}
\newcommand{\mconn}{multiply connected}
\newcommand{\sw}{spider's web}
\def\blfootnote{\xdef\@thefnmark{}\@footnotetext}
\begin{document}
%
%
\title[On the set where iterates are neither escaping nor bounded]{On the set where the iterates of an entire function are neither escaping nor bounded}
\author{J. W. Osborne, \, D. J. Sixsmith}
\address{Department of Mathematics and Statistics \\
	 The Open University \\
   Walton Hall\\
   Milton Keynes MK7 6AA\\
   UK}
\email{john.osborne@open.ac.uk}
\address{Department of Mathematics and Statistics \\
	 The Open University \\
   Walton Hall\\
   Milton Keynes MK7 6AA\\
   UK}
\email{david.sixsmith@open.ac.uk}
%
%
\begin{abstract}
For a {\tef} $f$, we study the set of points $BU(f)$ whose iterates under $ f $ neither escape to infinity nor are bounded. We give new results on the connectedness properties of this set and show that, if $U$ is a Fatou component that meets $BU(f)$, then most boundary points of $U$ (in the sense of harmonic measure) lie in $BU(f)$. We prove this using a new result concerning the set of limit points of the iterates of $f$ on the boundary of a wandering domain. Finally, we give some examples to illustrate our results.
\end{abstract}
\maketitle
%
%
\blfootnote{2010 \itshape Mathematics Subject Classification. \normalfont Primary 37F10; Secondary 30D05.}
\blfootnote{The second author was supported by Engineering and Physical Sciences Research Council grant EP/J022160/1.}
\section{Introduction}
\label{sec1}
Denote the $ n $th iterate of an entire function $ f $ by $ f^n $, for $ n \in \mathbb{N}. $  The \emph{Fatou set} $F(f)$ is defined to be the set of points $z\iscomplex$ such that $\{f^n\}_{n\isnatural}$ is a normal family in some neighbourhood of $z$. A component of $F(f)$ is referred to as a \emph{Fatou component}. The \emph{Julia set} $J(f)$ is the complement of $F(f)$ in $\mathbb{C}$. The Fatou and Julia sets together form a fundamental dynamical partition of $\mathbb{C}$ \---\ roughly speaking, the dynamical behaviour of $ f $ is stable on the Fatou set and chaotic on the Julia set. For an introduction to the properties of these sets, see \cite{aB, MR1216719, Mil}. 

Here we work with an alternative partition of the plane, based on the nature of the orbits of points; the \emph{orbit} of a point $z$ is the sequence $(f^n(z))_{n\geq 0}$ of its iterates under $ f $. Orbits may tend to infinity (in which case we say that they escape), or they may be bounded, or they may neither escape nor be bounded.  This paper is concerned with the properties of the set of points whose orbits neither escape nor are bounded \---\ that is, points whose orbits contain both bounded and unbounded suborbits.  For an entire function $ f $, we denote this set by $ BU(f). $

The set $ I(f) $ of points whose orbits escape (the \emph{escaping set}) is defined by 
$$ I(f) = \{z : f^n(z)\rightarrow\infty\text{ as }n\rightarrow\infty\}. $$
For a non-linear polynomial $ P $, the escaping set $ I(P) $ is the basin of attraction of the point at infinity and so lies in the Fatou set.  The escaping set for a general \tef\ $ f $ was first studied by Eremenko \cite{MR1102727} who showed that, by contrast, $ I(f) \cap J(f) \neq \emptyset $.  He also showed that all components of $ \overline{I(f)} $ are unbounded, and conjectured that the same may be true of all components of~$ I(f) $.  This conjecture, which remains open, has been the focus of much subsequent research in complex dynamics. 

The set $K(f)$ of points whose orbits are bounded is defined by 
$$
   K(f) = \{z : \text{ there exists } R>0 \text{ such that } |f^n(z)| \leq R, \text{ for } n\geq 0\}.
$$
When $P$ is a non-linear polynomial, the set $K(P)$ is known as the \emph{filled Julia set} and has been extensively investigated. The set $K(f)$ for a {\tef} $f$ was studied in \cite{MR2869069} and \cite{2012arXiv1208.3692O}. 

We define the set $ BU(f) $ as follows:
$$
BU(f) = \mathbb{C}\setminus(I(f) \cup K(f)).
$$
We say that a set $S$ is \emph{completely invariant} if $z \in S$ implies that $f(z) \in S$ and that $f^{-1}(\{z\}) \subset S$. It is easy to see that $I(f), K(f)$ and $ BU(f) $ are each completely invariant and that together they form a dynamical partition of the plane.

For a non-linear polynomial $ P $, it is well known that $BU(P) = \emptyset$. However, if $ f $ is a \tef, then $ BU(f) $ always contains points in the Julia set~\cite[Lemma~1]{BD2}. Examples of a {\tef} with a Fatou component in $BU(f)$ have been given by Eremenko and Lyubich \cite{MR918638} and by Bishop \cite{Bish3}.  We are unaware of any studies of the properties of $ BU(f) $ for a general {\tef}. 

In the next section of this paper, we briefly review some basic properties of $ BU(f) $ and prove the following. Recall that a Fatou component $ U $ is called a \emph{wandering domain} if it is not eventually periodic \---\ that is, if $U_n$ is the Fatou component containing $f^n(U)$ for $n\isnatural$, then $U_n = U_m$ only if $n=m$. 
\begin{theorem}
\label{Tbasicresults}
Let $f$ be a {\tef}.
\begin{enumerate}[(a)]
\item If $U$ is a Fatou component of $f$ and $U \cap BU(f) \ne \emptyset$, then $U \subset BU(f)$ and $U$~is a wandering domain.
\item $J(f) = \partial BU(f)$.
\end{enumerate}
\end{theorem}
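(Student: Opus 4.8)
The plan is to prove the two parts in turn, relying on normality of $\{f^n\}$ on Fatou components and basic properties of the escaping and bounded sets.

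For part (a), suppose $U$ is a Fatou component with some point $z_0 \in U \cap BU(f)$. Since $f^n \to \infty$ locally uniformly on any Fatou component is an either/or situation (by normality, on $U_n$ either $f^{kn}$ compactly diverges to $\infty$ or every locally uniform limit is a finite-valued holomorphic function), I first observe that $z_0 \notin I(f)$ rules out $U$ being in the escaping set: if $f^n \to \infty$ uniformly on a neighbourhood of $z_0$ then $z_0 \in I(f)$, contradiction; hence along some subsequence the orbits of points in $U$ stay in a bounded set. Conversely $z_0 \notin K(f)$ means the orbit of $z_0$ is unbounded, so there is a subsequence $n_k$ with $f^{n_k}(z_0) \to \infty$; by normality on a neighbourhood $V \subset U$ of $z_0$, passing to a further subsequence, $f^{n_k} \to \infty$ locally uniformly on $V$, and since the Fatou components $U_{n_k}$ containing $f^{n_k}(V)$ must themselves be eventually unbounded, every point of $U$ has an unbounded suborbit. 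Combining the two directions, every $w \in U$ has both a bounded suborbit and an unbounded suborbit, i.e. $w \in BU(f)$, giving $U \subset BU(f)$. Finally, $U$ must be a wandering domain: a periodic Fatou component is (pre)periodic and its forward orbit visits only finitely many Fatou components, each of which is either bounded or, if unbounded and in a Baker domain/escaping situation, lies in $I(f)$ — more carefully, on a periodic cycle of Fatou components the iterates $f^{np}$ restricted to one component converge to an attracting/parabolic cycle, or to a rotation domain, or $f^{np} \to \infty$; in each case the orbit of every point is either bounded or escaping, so $U \cap BU(f) = \emptyset$, a contradiction. Hence $U$ is a wandering domain.

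For part (b), I would show both inclusions. For $\partial BU(f) \subseteq J(f)$: by part (a) no Fatou component meets $\partial BU(f)$ without being contained in $BU(f)$, and a Fatou component contained in $BU(f)$ is an open subset of $BU(f)$, hence contributes nothing to the boundary; since $F(f)$ is open and its components are either inside $BU(f)$ or disjoint from $BU(f)$, every point of $F(f)$ has a neighbourhood lying entirely in $BU(f)$ or entirely in its complement, so $F(f) \cap \partial BU(f) = \emptyset$, i.e. $\partial BU(f) \subseteq J(f)$. For $J(f) \subseteq \partial BU(f)$: by the cited \cite[Lemma~1]{BD2}, $BU(f) \cap J(f) \neq \emptyset$; since $BU(f)$ is completely invariant and $J(f)$ is the closure of the backward orbit of any of its points (by Montel/blow-up properties of the Julia set, for a transcendental entire function $J(f) = \overline{\bigcup_n f^{-n}(z)}$ for $z \in J(f)$ outside the exceptional set, and the exceptional set has at most one point), taking $z \in BU(f) \cap J(f)$ with $z$ non-exceptional gives that backward iterates of $z$, all lying in $BU(f)$, are dense in $J(f)$. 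Thus $J(f) \subseteq \overline{BU(f)}$; combined with $J(f) \cap F(f) = \emptyset$ and the fact that $BU(f)$ has empty interior intersected with $J(f)$... — here I need that points of $BU(f) \cap J(f)$ are limits of points not in $BU(f)$; since $J(f) \subseteq \overline{I(f)}$ (Eremenko) and $I(f) \cap BU(f) = \emptyset$, every point of $J(f)$ is a limit of escaping points, hence a limit of points outside $BU(f)$, and also (as just shown) a limit of points of $BU(f)$, so $J(f) \subseteq \partial BU(f)$.

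The main obstacle I anticipate is the careful handling of unbounded Fatou components in part (a): ensuring that "some point has an unbounded orbit" genuinely propagates to "every point has an unbounded suborbit" requires the observation that if $f^{n_k}(z_0) \to \infty$ then, by normality, $f^{n_k} \to \infty$ locally uniformly on a neighbourhood (ruling out a finite limit function taking the value $\infty$), together with the fact that $f^{n_k}(U)$ lies in a single Fatou component $U_{n_k}$ which must then be unbounded and whose points are being carried off to infinity — this is the step where one uses connectedness of $U$ and the open mapping theorem to transfer the behaviour across the whole component. The rest is bookkeeping with complete invariance and the density of backward orbits in $J(f)$.
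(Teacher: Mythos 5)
Your proof is correct. For part (a) your argument is essentially the paper's, run in the forward direction rather than by contradiction: the paper supposes some $z_0\in U$ lies in $I(f)\cup K(f)$ and notes that normality then forces $U\subset I(f)$ or $U\subset K(f)$, contradicting $U\cap BU(f)\ne\emptyset$; you instead take $z_0\in U\cap BU(f)$ and propagate both the bounded and the unbounded suborbit to all of $U$. The two are the same normality fact, though your detour through ``the components $U_{n_k}$ must be unbounded'' is not needed \---\ the clean statement, which you do identify at the end, is that a locally uniformly convergent subsequence of $\{f^n\}$ on the connected domain $U$ has limit either a finite holomorphic function or identically $\infty$, so its behaviour at $z_0$ determines it on all of $U$. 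The wandering-domain conclusion via the classification of periodic components matches the paper. For part (b) you take a genuinely more self-contained route: the paper simply verifies the hypotheses of a lemma of Rippon and Stallard (backwards invariance, at least three points, $\operatorname{int}BU(f)\cap J(f)=\emptyset$ via density of repelling periodic points, and the all-or-nothing property from part (a)) and quotes the conclusion $\partial BU(f)=J(f)$, whereas you unpack the same ingredients directly: density of $BU(f)\cap J(f)$ in $J(f)$ from backward invariance of $BU(f)$ and the blowing-up property, $\partial BU(f)\cap F(f)=\emptyset$ from part (a), and $J(f)\subset\overline{BU(f)^c}$ via Eremenko's $J(f)=\partial I(f)$ in place of the paper's implicit use of repelling periodic points (which lie in $K(f)$). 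Both work; the paper's version is shorter because it black-boxes the lemma, while yours makes the mechanism visible.
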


Our first main result gives certain connectedness and boundedness properties of $BU(f)$ and related sets. Recall that all components of $\overline{I(f)}$ are unbounded. The first author has shown in \cite{2012arXiv1208.3692O} that either $K(f) \cap J(f)$ is connected, or every neighbourhood of any point in $J(f)$ meets uncountably many components of $K(f) \cap J(f)$; for a certain class of functions, the same property was also shown to hold for $K(f)$. Related results have been proved for $ K(f)^c $ in \cite{Kfc}, and for $ I(f) $ by Rippon and Stallard (forthcoming work).  

For $ BU(f) $ we prove the following.
\begin{theorem}
\label{Tconnectnessandboundedness}
Let $f$ be a {\tef}.
\begin{enumerate}[(a)]
\item If $f$ has no multiply connected Fatou components, then all components of $\overline{BU(f)}$ are unbounded.  Otherwise, all components of $\overline{BU(f)}$ are bounded. 
\item Either $BU(f) \cap J(f) $ is connected, or every neighbourhood of any point in $J(f)$ meets uncountably many components of $BU(f) \cap J(f) $.
\item Either $BU(f)$ is connected, or every neighbourhood of any point in $J(f)$ meets uncountably many components of $BU(f)$.
\end{enumerate}
\end{theorem}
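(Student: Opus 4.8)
My plan is to treat part~(a) by a separation argument in the plane, and parts~(b) and~(c) by a single blowing-up argument applied to a completely invariant set (part~(c) also using one result proved later in the paper).

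\textbf{Part (a).} I would begin with a trichotomy for Fatou components: on any Fatou component $U$ the family $\{f^n\}$ is normal, so either $f^n\to\infty$ locally uniformly on $U$, whence $U\subset I(f)$, or some subsequence converges locally uniformly to a holomorphic map $U\to\C$, in which case no orbit from $U$ escapes and, by Theorem~\ref{Tbasicresults}(a), $U\subset K(f)$ or $U\subset BU(f)$. Together with Theorem~\ref{Tbasicresults}(b) this gives $\overline{BU(f)}=BU(f)\cup J(f)$, and any Fatou component meeting $\C\setminus\overline{BU(f)}$ lies in $I(f)$ or $K(f)$. Now assume $f$ has no \mconn\ Fatou component and, for a contradiction, that $D$ is a bounded component of the closed set $\overline{BU(f)}$. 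A standard plane-topology argument produces a bounded \sconn\ domain $\Omega$ with $D\subset\Omega$ and $\partial\Omega\cap\overline{BU(f)}=\emptyset$; since $\partial\Omega$ is connected (by the Riemann map, as a decreasing intersection of compact connected sets) it lies in a single Fatou component $U$, which is \sconn\ and, by the observation above, lies in $I(f)$ or $K(f)$. As $U$ is \sconn\ its complement is connected, so if $\Omega\not\subset U$ one would deduce $\C\setminus U\subset\Omega$, forcing $J(f)$ to be bounded — which is false. Hence $D\subset\Omega\subset U$; but $U\cap\overline{BU(f)}=\emptyset$, so $D=\emptyset$, a contradiction, and all components of $\overline{BU(f)}$ are unbounded. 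Conversely, if $f$ \emph{does} have a \mconn\ Fatou component then, by Baker's theorem, $f$ has \mconn\ Fatou components surrounding $0$ and tending to infinity; these lie in $I(f)$, so there are Jordan curves $\gamma_n$, each surrounding $0$, with $\gamma_n\to\infty$ and $\gamma_n\cap\overline{BU(f)}=\emptyset$. Any component of $\overline{BU(f)}$ is connected and misses every $\gamma_n$, hence is confined inside $\gamma_n$ for all large $n$, and so is bounded.

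\textbf{Parts (b) and (c).} I would prove both from one statement: \emph{if $X\subset\C$ is completely invariant with $X\cap J(f)\neq\emptyset$ and $X$ is disconnected, then every neighbourhood of every point of $J(f)$ meets uncountably many components of $X$;} then apply it with $X=BU(f)\cap J(f)$ for~(b) and $X=BU(f)$ for~(c). Such an $X\cap J(f)$ is automatically infinite, without isolated points, and dense in $J(f)$. Write $X=P\sqcup Q$ with $P,Q$ nonempty and separated; since $\overline{f^{-n}(P)}\subset f^{-n}(\overline P)$, the decomposition $X=f^{-n}(P)\sqcup f^{-n}(Q)$ is again a separation, for every $n$. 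In case~(c) one must first check that $P$ and $Q$ both meet $J(f)$: if, say, $Q\cap J(f)=\emptyset$, then $Q$ is a nonempty union of wandering domains contained in $BU(f)$, and for such a domain $U$ the paper's result on the limit set of the iterates on $\partial U$ yields a boundary point of $U$ in $BU(f)\subset P$, contradicting $\overline Q\cap P=\emptyset$. Thus one may fix $p_0\in P\cap J(f)$ and $q_0\in Q\cap J(f)$ outside the (at most one-point) exceptional set. The engine is then a blow-up step: any disc $B$ with $B\cap J(f)\neq\emptyset$ contains two disjoint closed discs $B_0,B_1$, each meeting $J(f)$, of diameter at most half that of $B$, each containing a point of $X$, together with an integer $N=N(B)$ such that $f^N(B_0\cap X)\subset P$ and $f^N(B_1\cap X)\subset Q$. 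One gets this by choosing $N$ with $f^N(B)$ containing a fixed disc $\overline{B(0,R_0)}$ that contains $p_0$ and $q_0$, pulling $p_0,q_0$ back into $B$ — the preimages lie in $J(f)$ and in $X$ by complete invariance — and shrinking about them, using that $P,Q$ are relatively open in $X$ and $f^N(X)\subset X$. Iterating yields a binary tree of discs $D_\sigma$ with a point $c_\sigma\in D_\sigma\cap X$ attached to each node; for incomparable nodes $\sigma,\tau$ with deepest common ancestor $\rho$, the separation $X=f^{-N(D_\rho)}(P)\sqcup f^{-N(D_\rho)}(Q)$ places $c_\sigma$ and $c_\tau$ in different components of $X$. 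A pigeonhole argument on branches then finishes the proof: if the starting disc met only countably many components of $X$, then along each branch $\beta\in\{0,1\}^{\mathbb N}$ some one component would contain $c_{\beta|n}$ for infinitely many $n$, and distinct branches would produce distinct such components, giving an injection from $\{0,1\}^{\mathbb N}$ into a countable set — absurd.

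\textbf{Main obstacle.} The genuinely delicate point is extracting \emph{uncountably} many components rather than just infinitely many. The natural candidates for distinct components are the nested limits $\bigcap_n D_{\beta|n}$, but these lie only in $J(f)$ and need not lie in $X$, since neither $BU(f)$ nor $BU(f)\cap J(f)$ is closed; so one cannot simply assign one component per branch. The remedy is to carry genuine points of $X$ through the construction and to use that the chosen disconnection of $X$ is reproduced by every backward iterate, so the ``which side'' label is inherited down the tree and persists along each branch; the pigeonhole step then converts this into the required cardinality. For part~(c), the one further non-routine ingredient is the reduction above to disconnections both of whose pieces meet $J(f)$, and this is precisely where the paper's new theorem on the limit points of the iterates on the boundary of a wandering domain enters.
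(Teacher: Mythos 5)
Your proposal is correct in substance, and for parts (b) and (c) it takes a genuinely different route from the paper's. Part (a) is essentially the paper's argument \---\ a bounded component of $\overline{BU(f)}$ is separated from the rest and from $\infty$ by a curve in the complement of $\overline{BU(f)}$, which must lie in a Fatou component that is then forced to be \mconn\ \---\ except that you use the identity $J(f)=\partial BU(f)$ where the paper invokes Montel's theorem to place the separating annulus in $F(f)$; either works. For (b) and (c) the paper realises a disconnection via Lemma~\ref{Lasse} as a closed connected set $\Gamma$ with two complementary domains meeting the set, and then, for each binary sequence, uses the itinerary lemma (Lemma~\ref{RSlemm}) to manufacture a \emph{new} point of $BU(f)\cap J(f)$ whose orbit shuttles between bounded targets on prescribed sides of $\Gamma$ and a sequence of targets tending to infinity, so that membership in $BU(f)$ must be certified from the orbit structure. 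You instead work directly with a separation $X=P\sqcup Q$, note that complete invariance pulls it back to a separation $X=f^{-n}(P)\sqcup f^{-n}(Q)$, and use the blowing-up property to plant preimages of two fixed points $p_0\in P$, $q_0\in Q$ in a shrinking binary tree of discs; membership in $X$ is then automatic and Lemma~\ref{RSlemm} is not needed. Your pigeonhole step is sound: a component containing infinitely many markers along a branch is trapped on one side of the pulled-back separation at the branching level, so distinct branches yield distinct components. This is more elementary, and it isolates the one place where Corollary~\ref{CboundaryinBU} is needed (to ensure in case (c) that both sides of the separation meet $J(f)$), which is exactly where the paper uses it too.

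Three small repairs. In (a), $\C\setminus U$ need not be connected when the \sconn\ component $U$ is unbounded (e.g.\ a strip); but every component of $\C\setminus U$ then accumulates at $\infty$, so a component of $\C\setminus U$ trapped inside $\Omega$ is still impossible and the argument survives. In the blow-up step, apply Lemma~\ref{Lblow} to the compact set $\{p_0,q_0\}$ rather than to $\overline{B(0,R_0)}$, which might contain the exceptional point. Finally, your general statement for completely invariant $X$ should carry the hypothesis that both sides of the separation contain non-exceptional points of $J(f)$; this holds in both applications because $BU(f)\cap J(f)$ contains the points with dense orbits and so has no isolated points, whence each (relatively open) side of a separation that meets $J(f)$ meets it in an infinite set.
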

Next, we consider the boundary of a Fatou component that lies in $BU(f)$. Rippon and Stallard \cite[Theorem 1.1]{MR2801622} showed that, if $f$ is a {\tef} and $U\subset I(f)$ is a wandering domain, then $\partial U \cap I(f)^c$ has harmonic measure zero relative to $U$; we refer to Section~\ref{S3} for a definition. They also showed \cite[Theorem 1.2]{MR2801622} that, if $U$ is a Fatou component of $f$ and $\partial U \cap I(f)$ has positive harmonic measure relative to $U$, then $U \subset I(f)$. We prove the following version of these results for $BU(f)$.

\begin{theorem}
\label{TboundaryinBU}
Suppose that $f$ is a {\tef} and that $U$ is a Fatou component of $f$.
\begin{enumerate}[(a)]
\item If $U \subset BU(f)$, then $\partial U \cap BU(f)^c$ has harmonic measure zero relative to $U$.
\item If $U$ is a wandering domain and $\partial U \cap BU(f)$ has positive harmonic measure relative to $U$, then $U \subset BU(f)$.
\end{enumerate}
\end{theorem}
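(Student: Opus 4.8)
The plan is to follow the harmonic-measure strategy of Rippon and Stallard, using the fact (Theorem~\ref{Tbasicresults}(a)) that any Fatou component meeting $BU(f)$ is a wandering domain, together with a key lemma on the limit set of the iterates $(f^n)$ on $\partial U$. For part~(b), suppose $U$ is a wandering domain with $\partial U \cap BU(f)$ of positive harmonic measure $\omega(z_0,\cdot,U)$ for some $z_0 \in U$. The heart of the matter is that the harmonic measure cannot concentrate too badly under iteration: writing $U_n$ for the Fatou component containing $f^n(U)$, the pushed-forward measure $f^n_*\omega(z_0,\cdot,U)$ is dominated by $\omega(f^n(z_0),\cdot,U_n)$ (harmonic measure does not increase under a holomorphic map). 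Since $\partial U \cap BU(f)$ has positive measure and $BU(f)$ is completely invariant, $f^n(\partial U \cap BU(f)) \subset \partial U_n \cap BU(f)$, so each $\partial U_n$ carries a definite amount of harmonic measure in $BU(f)$. If $U \not\subset BU(f)$, then by Theorem~\ref{Tbasicresults}(a) either $U \subset I(f)$ or $U \subset K(f)$. In the first case $f^n(z_0) \to \infty$, and I would argue (as in Rippon--Stallard's Theorem~1.2) that the harmonic measure on $\partial U_n$ then escapes to infinity in an appropriate sense, forcing $\partial U \cap I(f)$ to have full measure, hence $\partial U \cap BU(f)$ measure zero, a contradiction. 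In the second case $\{f^n(z_0)\}$ is bounded, and the normal-family / equicontinuity structure of the Fatou set on a bounded orbit of components should similarly force the iterates to converge (locally uniformly on $U$) to a bounded limit, pushing the harmonic measure of $\partial U$ into $K(f)$ up to a null set — again contradicting positive measure in $BU(f)$.

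The tool that makes both cases work is the ``limit set'' result alluded to in the abstract: on the boundary of a wandering domain, the set of limit points of $(f^n)$ is controlled, and in particular the set of $\zeta \in \partial U$ whose orbit $(f^n(\zeta))$ has a bounded subsequence and an unbounded subsequence has harmonic measure either zero or full. Granting that dichotomy, part~(a) follows almost immediately: if $U \subset BU(f)$ then the orbit of $z_0$ (hence, by equicontinuity on compact subsets of $U$, the orbit of a.e.\ point near $\partial U$) has both a bounded and an unbounded suborbit, so the ``full measure'' alternative holds, i.e.\ $\partial U \cap BU(f)$ has full harmonic measure and $\partial U \cap BU(f)^c$ has harmonic measure zero.

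Concretely the steps are: (1) reduce to wandering domains via Theorem~\ref{Tbasicresults}(a); (2) establish the pushforward inequality $f^n_*\,\omega(z_0,\cdot,U) \le \omega(f^n(z_0),\cdot,U_n)$ and the invariance $f^n(\partial U \cap BU(f)) \subset \partial U_n \cap BU(f)$; (3) prove the harmonic-measure dichotomy (zero or full) for $\partial U \cap BU(f)$ on a wandering domain, which is where the boundary-limit-set theorem enters; (4) for part~(b), rule out $U \subset I(f)$ by the escaping-harmonic-measure argument of \cite[Theorem~1.2]{MR2801622}, and rule out $U \subset K(f)$ by a convergence argument for bounded orbits of Fatou components; (5) for part~(a), apply the dichotomy together with equicontinuity to conclude $\partial U \cap BU(f)^c$ is null.

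The main obstacle I anticipate is step~(3), the dichotomy itself, and in particular handling the $K(f)$ side in step~(4): unlike $I(f)$, where convergence to $\infty$ gives clean control, a bounded orbit of Fatou components need not be eventually periodic for a wandering domain, so one must rule out the possibility that the harmonic measure splits — part in $K(f)$, part in $BU(f)$ — in a stable way. I expect this requires showing that if $\partial U \cap BU(f)$ has positive but not full measure then the complementary piece, sitting in $I(f)^c \cap K(f)^c$... — more carefully, that the limit functions of $(f^{n_k})$ along the bounded subsequence and along the unbounded subsequence are compatible only if the measure is concentrated entirely on one of the two invariant sets. This rigidity statement, presumably the content of the new limit-set result the authors reference, is the crux; everything else is a fairly standard harmonic-measure bookkeeping exercise adapted from \cite{MR2801622}.
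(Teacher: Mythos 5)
Your overall strategy is the paper's: reduce to wandering domains and to the trichotomy $U \subset I(f)$, $U \subset K(f)$ or $U \subset BU(f)$ via Theorem~\ref{Tbasicresults}(a), and then invoke the boundary limit-set theorem (Theorem~\ref{Tboundaryingeneral}) to transfer the interior behaviour to almost every boundary point. The pushforward inequality and the separate escaping-harmonic-measure argument in your steps (2) and (4) are not needed once that theorem is available; the paper's proof of Theorem~\ref{TboundaryinBU} is precisely the short application of Theorem~\ref{Tboundaryingeneral} to the three cases.

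Two points in your write-up do not stand as stated, however. First, the key lemma you formulate --- that the set of boundary points whose orbit has both a bounded and an unbounded suborbit has harmonic measure \emph{zero or full} --- is too weak to yield part (a): knowing the measure is zero or full does not tell you which alternative holds, and your proposed way of deciding it (``by equicontinuity on compact subsets of $U$, the orbit of a.e.\ point near $\partial U$\dots'') is not a valid inference, since equicontinuity on compacta of $U$ gives no control over points of $\partial U$. What is needed, and what the paper proves, is the stronger statement that $\Lambda(z,f) = \Lambda(U,f)$ for all $z \in \partial U$ outside a set of harmonic measure zero; since $\Lambda(U,f)$ contains both a finite point and $\infty$ when $U \subset BU(f)$, this immediately places a.e.\ boundary point in $BU(f)$. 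Second, in the $K(f)$ case of part (b), the iterates on a wandering domain with bounded orbit need not converge locally uniformly to any limit --- the image components can oscillate within a bounded set --- so the convergence you invoke is not available; the correct input is simply that $\infty \notin \Lambda(U,f)$ implies $\infty \notin \Lambda(z,f)$ for a.e.\ $z \in \partial U$. With the limit-set theorem stated in its equality form, both repairs are immediate and the rest of your outline is sound.
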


In view of Theorem~\ref{Tbasicresults}(a), the following corollary of Theorem~\ref{TboundaryinBU} is immediate.

\begin{corollary}
\label{CboundaryinBU}
If $f$ is a {\tef}, then every component of $ BU(f) $ meets $ J(f) $.
\end{corollary}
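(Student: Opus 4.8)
The plan is to follow the route signalled in the remark preceding the statement, combining Theorem~\ref{Tbasicresults}(a) with Theorem~\ref{TboundaryinBU}(a). Let $C$ be a component of $BU(f)$. If $C \subset J(f)$ there is nothing to prove, so I would assume that $C$ meets the Fatou set, say $C$ contains a point lying in a Fatou component $U$.

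The first step is to show that the whole of $U$ lies in $C$. Since $U \cap BU(f) \ne \emptyset$, Theorem~\ref{Tbasicresults}(a) gives $U \subset BU(f)$; as $U$ is connected it is contained in a single component of $BU(f)$, which must therefore be $C$. So $U \subset C$, and it remains only to produce a point of $C$ that lies in $J(f)$.

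For this I would apply Theorem~\ref{TboundaryinBU}(a) to $U$: since $U \subset BU(f)$, the set $\partial U \cap BU(f)^c$ has harmonic measure zero relative to $U$. Because $\partial U$ itself has harmonic measure $1$ relative to $U$, this forces $\partial U \cap BU(f) \ne \emptyset$, so I can pick a point $z$ in this set. On the one hand $z \in \partial U$, and the boundary of any Fatou component is contained in $J(f)$ (a neighbourhood of a point of another Fatou component would be disjoint from $U$), so $z \in J(f)$. On the other hand $z$ is a limit point of the connected set $U$, so $U \cup \{z\}$ is connected, and it is contained in $BU(f)$; hence $U \cup \{z\}$ lies in the component $C$, giving $z \in C$. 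Thus $C \cap J(f) \ne \emptyset$, as required.

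There is no real obstacle here once Theorems~\ref{Tbasicresults} and~\ref{TboundaryinBU} are available: the only points needing care are the standard fact that $\partial U \subset J(f)$ and the elementary observation that adjoining a single boundary point to the connected set $U$ leaves it connected and still inside $BU(f)$, so that the new point lies in the same component $C$.
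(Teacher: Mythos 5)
Your argument is correct and is exactly the deduction the paper has in mind when it calls the corollary ``immediate'' from Theorem~\ref{Tbasicresults}(a) and Theorem~\ref{TboundaryinBU}(a): a component meeting $F(f)$ absorbs a whole Fatou component $U\subset BU(f)$, and since $\partial U\cap BU(f)^c$ has harmonic measure zero while $\partial U$ carries full harmonic measure, some boundary point of $U$ lies in $BU(f)$ and hence in the same component, giving a point of $J(f)$. No gaps; the details you flag (that $\partial U\subset J(f)$ and that adjoining a boundary point preserves connectedness) are handled correctly.
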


In fact, Theorem~\ref{TboundaryinBU} is a consequence of the following more general result which is of wider interest. For a \tef\ $ f $, the \emph{$\omega$-limit set} $\Lambda(z,f)$ of the point $ z \in \mathbb{C} $ is the set of accumulation points of its orbit in $\sphere$ (we avoid the usual notation $\omega(z,f)$ because of possible confusion with the notation for harmonic measure).  For a wandering domain $ U $ of $f$, it is well known -- see, for example, \cite[p.317]{Cremer} and \cite[Section 28]{MR1504797} -- that $\Lambda(z_1,f) = \Lambda(z_2,f)$ for $z_1, z_2 \in U$, so in this case we can write $\Lambda(U,f)$ without ambiguity. We show that, in a precise sense, this equality of $\omega$-limit sets extends to most of the boundary of a wandering domain.

\begin{theorem}
\label{Tboundaryingeneral}
Suppose that $f$ is a {\tef} and that $U$ is a wandering domain of $f$. Then the set
$$
  \{ z \in \partial U : \Lambda(z,f) \ne \Lambda(U, f) \}
$$
has harmonic measure zero relative to $U$.
\end{theorem}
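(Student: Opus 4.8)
The plan is to reduce the theorem to a question purely about harmonic measure, by exploiting the \emph{inner function} naturally attached to $f^n$ on a simply connected Fatou component, and then using this to carry the classical fact that $\Lambda(\,\cdot\,,f)$ is constant on $U$ out to $\partial U$. First I would dispose of the multiply connected case: by a theorem of Baker a multiply connected Fatou component is a wandering domain on which $f^n\to\infty$, so $U\subset I(f)$ and $\Lambda(U,f)=\{\infty\}$; then \cite[Theorem~1.1]{MR2801622} shows $\partial U\cap I(f)^c$ has harmonic measure zero, while for $z\in\partial U\cap I(f)$ we have $f^n(z)\to\infty$, so $\Lambda(z,f)=\{\infty\}=\Lambda(U,f)$. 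The same reasoning shows that if some $U_n$ (the Fatou component containing $f^n(U)$) is multiply connected then again $U\subset I(f)$ and we are done. So from now on $U$ and all the $U_n$ are simply connected. Fix $z_0\in U$, write $z_n=f^n(z_0)\in U_n$, let $\phi=\phi_0:\mathbb D\to U$ and $\phi_n:\mathbb D\to U_n$ be Riemann maps with $\phi_n(0)=z_n$, and set $g_n:=\phi_n^{-1}\circ f^n\circ\phi:\mathbb D\to\mathbb D$, a holomorphic self-map of $\mathbb D$ fixing $0$.

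The first key point is that every $g_n$ is an inner function. Indeed $\phi$ has nontangential limits almost everywhere on $\partial\mathbb D$; for a.e.\ $\zeta$ the limit $\phi^*(\zeta)$ lies in $\partial U\subset J(f)$, and since $f^n$ is continuous there, $f^n\circ\phi$ has nontangential limit $f^n(\phi^*(\zeta))\in J(f)\cap\overline{U_n}=\partial U_n$. Hence $g_n=\phi_n^{-1}\circ f^n\circ\phi$ cannot have a nontangential limit lying in $\mathbb D$ on a positive measure set, so $|g_n^*|=1$ a.e. Because an inner function fixing $0$ preserves normalised Lebesgue measure $m$ on $\partial\mathbb D$ under its boundary map, because harmonic measure for $U$ at $z_0$ (resp.\ for $U_n$ at $z_n$) is the push-forward of $m$ under $\phi^*$ (resp.\ $\phi_n^*$), and because $\phi_n\circ g_n=f^n\circ\phi$ gives $f^n\circ\phi^*=\phi_n^*\circ g_n^*$ a.e., one obtains the transfer identity
\[
 \omega\bigl(z_0,\{\,z\in\partial U:f^n(z)\in A\,\},U\bigr)=\omega(z_n,A,U_n)\qquad\text{for Borel }A\subseteq\partial U_n,
\]
interpreted via the prime-end boundary (on which $\phi^*$ is injective $m$-a.e.). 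This is the quantitative mechanism behind \cite[Theorems~1.1 and~1.2]{MR2801622} and Theorem~\ref{TboundaryinBU}; setting it up carefully, in particular handling the boundary correspondence when $U$ or $U_n$ is unbounded, is the main bookkeeping.

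With the transfer identity in hand I would push the classical equality $\Lambda(z_1,f)=\Lambda(z_2,f)$ for $z_1,z_2\in U$ out to the boundary. Recall that, since the $U_n$ are pairwise disjoint, every locally uniform limit of a subsequence of $f^n|_U$ is constant (a non-constant such limit would be an open map whose image lies in $J(f)$, which has empty interior), and that $L:=\Lambda(U,f)$ is exactly the (closed) set of such limit values, equivalently the set of accumulation points of $(z_n)$ in $\sphere$. Fix a decreasing sequence of open neighbourhoods $O_j$ of $L$ in $\sphere$ with $\bigcap_j O_j=L$ and $\overline{O_{j+1}}\subseteq O_j$. To get $\Lambda(z,f)\subseteq L$ for $\omega$-a.e.\ $z\in\partial U$ it suffices, for each $j$, to prove that
\[
 \omega\bigl(z_0,\{\,z\in\partial U:f^n(z)\notin O_j\text{ for infinitely many }n\,\},U\bigr)=0,
\]
and by the transfer identity the measure of the $n$th set is $\omega(z_n,\partial U_n\setminus O_j,U_n)$. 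For large $n$ we have $z_n\in O_{j+1}$, so $z_n$ is separated from $\partial U_n\setminus\overline{O_j}$ inside $U_n$ by the shell $O_j\setminus\overline{O_{j+1}}$; the task is to turn this, together with the fact that the orbit of $z_0$ stays within bounded hyperbolic distance in $U_n$ of the orbit of every other point of $U$ (which, via the $g_n$, is a uniform Schwarz--Pick bound), into an estimate $\omega(z_n,\partial U_n\setminus O_j,U_n)\le\varepsilon_n$ with $\sum_n\varepsilon_n<\infty$, after which Borel--Cantelli finishes. The reverse inclusion $\Lambda(z,f)\supseteq L$ is symmetric: for $p$ in a countable dense subset of $L$ and a neighbourhood $O$ of $p$, the orbit $(z_n)$ visits $O$ infinitely often, and $O$ cannot lie inside $U_n$ for all large $n$ (the $U_n$ being pairwise disjoint), so $\omega(z_n,\partial U_n\cap O,U_n)>0$ infinitely often; again one needs a quantitative lower bound, summed via the transfer identity and Borel--Cantelli, to conclude that $f^n(z)$ visits $O$ infinitely often for $\omega$-a.e.\ $z$.

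I expect the main obstacle to be exactly these two harmonic-measure estimates in the last step: extracting, from the soft facts that $z_n$ sits deep in the shrinking neighbourhoods $O_j$ and that orbits remain at bounded hyperbolic distance in the geometrically uncontrolled domains $U_n$, bounds strong enough to run Borel--Cantelli --- either genuine summability, or a martingale-type upgrade of convergence in measure to almost-everywhere convergence. The inner-function observation reduces the whole theorem to this harmonic-measure question; the remaining points (prime ends, unbounded components, the identity $f^n\circ\phi^*=\phi_n^*\circ g_n^*$) are routine but need care.
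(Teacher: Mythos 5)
Your reduction is sound as far as it goes, and the inner-function transfer identity $\omega(z_0,\{z\in\partial U: f^n(z)\in A\},U)=\omega(z_n,A,U_n)$ is in fact stronger than what is needed: the paper gets by with the one-sided subordination inequality $\omega(z_0,B_n,U)\le\omega(z_n,E_n,U_n)$, which requires no inner-function theory at all. But your argument stops exactly where the theorem lives: you explicitly leave open the two harmonic-measure estimates that would make Borel--Cantelli run, and without them there is no proof. The missing idea is the \emph{source of the summability}. It does not come from the hyperbolic-distance or ``$z_n$ sits deep inside $O_{j+1}$'' considerations you describe \---\ those give no control in a long, thin $U_n$ \---\ but from the disjointness of the orbit domains. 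After a M\"obius change of coordinates placing the bad spherical disc $T$ at $\{z:|z|\ge r'\}$, the components $V_n$ of $U_n\cap\Delta$ (with $\Delta=\{z:|z|<r'\}$) containing $z_n$ are pairwise disjoint, so the sets $F_n=\partial V_n\cap\partial\Delta$ are pairwise disjoint subsets of the single circle $\partial\Delta$, whence $\sum_n\omega(0,F_n,\Delta)\le 1$. Harnack's inequality (using $|z_n|<r<r'$) converts this into $\sum_n\omega(z_n,\partial U_n\cap T,U_n)\le K<\infty$, which is precisely the bound $\sum_n\varepsilon_n<\infty$ you ask for. Note that this forces one to cover the complement of $O_j$ by countably many small spherical discs and treat one disc at a time, since the argument needs $\Delta$ to be a disc; this is the content of the paper's Lemma~\ref{bdrylemm} and of the covering by the discs $T_{p,q}$.

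For the reverse inclusion $\Lambda(z,f)\supseteq\Lambda(U,f)$ your plan heads toward the divergence half of Borel--Cantelli, which fails without independence: a positive lower bound on $\omega(z_n,\partial U_n\cap O,U_n)$ for infinitely many $n$ does not imply that almost every boundary orbit enters $O$ infinitely often. The paper sidesteps this by converting the statement back into a limsup-of-small-sets statement: for $\zeta'\in\Lambda(U,f)$ it chooses a subsequence $(q_n)$ along which $d(f^{q_n}(z_0),\zeta')<\delta/2$ and applies the \emph{same} summability lemma to the compositions $f^{q_1}, f^{q_2-q_1},\dots$, concluding that the set of $z\in\partial U$ with $d(f^{q_n}(z),\zeta')\ge\delta$ for infinitely many $n$ has harmonic measure zero; hence for almost every $z$ the points $f^{q_n}(z)$ eventually lie in the $\delta$-disc about $\zeta'$, so $\zeta'\in\Lambda(z,f)$. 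Both halves of the theorem therefore rest on the single disjointness estimate above, which is absent from your proposal.
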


We list below a number of further questions that arise naturally from the results and examples in this paper.

\begin{question}\normalfont
Does there exists a {\tef} $f$ such that $BU(f)$ is connected?
\end{question}
\begin{question}\normalfont
Is there a \tef\ $ f $ with a wandering domain $ U $ such that the $ \omega $-limit set $ \Lambda(U,f) $ is an uncountable set (see the remark at the end of Section~\ref{S3})? 
\end{question}
\begin{question}\normalfont
In view of Example~\ref{exampleunbdfatou} below, is there a \tef\ with an unbounded wandering domain in $ BU(f) $, all of whose iterates are unbounded?
\end{question}

The structure of this paper is as follows. In Section~\ref{S1}, we briefly review some basic properties of $ BU(f) $ and prove Theorem~\ref{Tbasicresults}.  Then, in Section~\ref{S2}, we prove the connectedness and boundedness properties of $ BU(f) $ and related sets given in Theorem~\ref{Tconnectnessandboundedness}. In Section~\ref{S3} we give the proofs of Theorem~\ref{TboundaryinBU} and Theorem~\ref{Tboundaryingeneral}. In Section~\ref{Sdimension} we briefly discuss the Hausdorff dimension of $BU(f)$. Finally, in Section~\ref{S4}, we give a number of examples to illustrate some of the different topological and dynamical structures that can occur for $BU(f)$. In particular, we show that there are {\tef}s for which $BU(f)$ is totally disconnected, or has uncountably many unbounded components with empty interior, or contains an unbounded Fatou component.
%
%
%
%
\section{Basic properties of $ BU(f) $}
\label{S1}

For the convenience of the reader, we briefly review a number of basic properties of $ BU(f) $ for a general \tef\ $ f $ before giving the proof of Theorem~\ref{Tbasicresults}.

Points in $ J(f) $ whose orbits are dense in $ J(f) $ evidently lie in $ BU(f) $.  Since the set of such points is dense in $ J(f) $ \cite[Lemma~1]{BD2}, it follows that $ BU(f) $ is an infinite, unbounded set. It is known \cite{Bish3, MR918638} that there are \tef s for which $ BU(f) $ meets  $ F(f) $ as well as $ J(f) $. The facts that $ BU(f) $ is completely invariant, and that $ BU(f) = BU(f^n) $ for $ n\isnatural $, follow from the definition of $ BU(f) $ and the corresponding properties for $ I(f) $ and $ K(f) $.  
 
To prove Theorem~\ref{Tbasicresults}, we need the following special case of \cite[Lemma 10]{MR2792984}. For $ f: \C \to \C $ we say that a set $S \subset \C$ is \emph{backwards invariant under $f$} if $z \in S$ implies that $f^{-1}(\{z\}) \subset S$.
\begin{lemma}
\label{Jlemm}
Suppose that $f$ is a {\tef}, and that $E~\subset~\mathbb{C}$ contains at least three points. Suppose also that $E$ is backwards invariant under $f$, that $\operatorname{int} E\cap J(f) = \emptyset$, and that every component of $F(f)$ that meets $E$ is contained in $E$. Then $\partial E = J(f)$.
\end{lemma}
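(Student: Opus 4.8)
The plan is to prove the two inclusions $J(f) \subseteq \partial E$ and $\partial E \subseteq J(f)$ separately, using the three hypotheses in a fairly direct way together with one standard fact about Julia sets.

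First I would show that $J(f) \subseteq \overline{E}$. The key input here is the classical fact that, for a {\tef} $f$, there is at most one exceptional point (a point with finite backward orbit), and for any non-exceptional point $z$ the closure of the full backward orbit $\bigcup_{n \geq 0} f^{-n}(\{z\})$ contains $J(f)$. Since $E$ contains at least three points, it contains a non-exceptional point $z_0$; backwards invariance of $E$ then gives $\bigcup_{n\geq 0} f^{-n}(\{z_0\}) \subseteq E$, and hence $J(f) \subseteq \overline{E}$. Combining this with the hypothesis $\operatorname{int} E \cap J(f) = \emptyset$ and the decomposition $\overline{E} = \operatorname{int} E \cup \partial E$ yields $J(f) \subseteq \partial E$.

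For the reverse inclusion $\partial E \subseteq J(f)$, I would argue by contradiction: suppose $z \in \partial E \cap F(f)$, and let $U$ be the Fatou component containing $z$. Since $z \in \overline{E}$ and $U$ is an open neighbourhood of $z$, we have $U \cap E \ne \emptyset$; the hypothesis on Fatou components then forces $U \subseteq E$, so $z \in \operatorname{int} E$, contradicting $z \in \partial E$. Hence $\partial E \cap F(f) = \emptyset$, i.e.\ $\partial E \subseteq J(f)$, and together with the previous paragraph this gives $\partial E = J(f)$.

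The argument is short and the only non-elementary ingredient is the density in $J(f)$ of the backward orbit of a non-exceptional point, so I do not anticipate a serious obstacle. The one point requiring mild care is verifying that $E$ actually contains a non-exceptional point, which is exactly where the assumption that $E$ has at least three points is used (a {\tef} has at most one exceptional point, so in fact two points would already suffice here).
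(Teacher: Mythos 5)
Your proof is correct. Both inclusions are argued soundly: the forward inclusion $J(f)\subseteq\partial E$ follows from the standard fact that the backward orbit of any non-exceptional point accumulates on all of $J(f)$, combined with backwards invariance and the hypothesis $\operatorname{int}E\cap J(f)=\emptyset$ via the decomposition $\overline{E}=\operatorname{int}E\cup\partial E$; the reverse inclusion uses the Fatou-component hypothesis exactly as intended. Note that the paper itself does not prove this lemma --- it simply cites it as a special case of a lemma of Rippon and Stallard stated for meromorphic functions, where the exceptional set can contain two points (which is why the hypothesis asks for three points; your remark that two suffice in the entire case is correct). Your argument is essentially the natural direct proof of that cited lemma, specialised to entire functions, so there is no gap.
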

\begin{proof}[Proof of Theorem~\ref{Tbasicresults}]
For part (a), let $U$ be a Fatou component of~$f$ such that $ U \cap BU(f) \neq \emptyset $ and suppose that, if possible, $ z_0 \in U $ does not lie in $ BU(f) $.  Then $ z_0 \in I(f) \cup K(f) $, and it follows by normality that if $ z_0 \in I(f) $ then $ U \subset I(f), $ whilst if $ z_0 \in K(f) $ then $ U \subset K(f). $  Hence there is no such $ z_0 $, which establishes that $U \subset BU(f)$. The fact that $U$ is a wandering domain follows from the classification of periodic Fatou components; see, for example, \cite[Theorem~6]{MR1216719}. \\

For part (b), we apply Lemma~\ref{Jlemm} with $E=BU(f)$.  As discussed above, $BU(f)$ is an infinite set and is backwards invariant. Moreover, the repelling periodic points of $f$ are dense in $J(f)$ \cite[Theorem 1]{Bak68}, and since by definition $BU(f)$ contains no periodic points, it follows that $\operatorname{int} BU(f) \cap J(f) = \emptyset$. Finally, by part (a), every component of $F(f)$ that meets $BU(f)$ is contained in $BU(f)$.  Thus the conditions of Lemma~\ref{Jlemm} are satisfied with $E = BU(f)$, and we conclude that $ J(f) = \partial BU(f) $, as required.
\end{proof}

%
%
%
\section{Connectedness and boundedness properties}
\label{S2}
In this section we prove Theorem~\ref{Tconnectnessandboundedness}, which gives certain connectedness and boundedness properties of $BU(f)$ and related sets. 

Our proof of Theorem~\ref{Tconnectnessandboundedness} requires the use of Corollary~\ref{CboundaryinBU}, which follows from Theorem~\ref{TboundaryinBU}(a), proved in Section~\ref{S3}. We also need a number of other results, which we gather together here in the form of a series of lemmas.  The first, due to Baker \cite{MR759304}, gives some basic properties of \mconn\ Fatou components for \tef s.  We say that the set $ S $ \textit{surrounds} a set or a point if that set or point lies in a bounded complementary component of~$ S $.
\begin{lemma} \label{baker}
Let $ f $ be a \tef\ and let $ U $ be a \mconn\ Fatou component of $ f $.  Then
\begin{itemize}
\item $ f^k(U) $ is bounded for any $ k \in \N $,
\item $ f^{k+1} (U) $ surrounds $ f^k(U) $ for sufficiently large $ k $, and
\item $ \inf \{\vert z \vert: z \in f^k(U)\}  \to \infty $ as $ k \to \infty. $
\end{itemize}
\end{lemma}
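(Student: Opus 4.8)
The plan is to drive everything from two facts: the fast growth of a \tef, and a separating-loop argument. Since $U$ is \mconn, its complement has a bounded component, so there is a Jordan curve $\gamma \subset U$ that is not null-homotopic in $U$. This $\gamma$ encloses a nonempty compact set disjoint from $U$ whose boundary lies in $\partial U \subset J(f)$, so I would fix a point $a \in J(f)$ enclosed by $\gamma$ (that such a point exists in $J(f)$ uses, for instance, the density of repelling periodic points \cite{Bak68}). Since $J(f)$ is completely invariant, the whole orbit $(f^k(a))_{k\ge 0}$ stays in $J(f)$.

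Next I would transport the loop forward. Writing $V$ for the bounded Jordan domain bounded by $\gamma$, the set $f^k(V)$ is open and bounded with $\partial f^k(V) \subseteq f^k(\partial V) = f^k(\gamma)$, while $f^k(a) \in f^k(V)$; hence $f^k(\gamma)$ surrounds $f^k(a) \in J(f)$. In particular the Fatou component $U_k$ containing $f^k(U)$ is itself \mconn\ for every $k$. The maximum principle applied to the entire function $f^k$ on $V$ also records the upper bound $\max_{\overline V}|f^k| = \max_\gamma |f^k|$, which I will use below. A consequence of these observations is that, to prove the first bullet, it suffices to show that \emph{every} \mconn\ Fatou component is bounded, since boundedness of $f^k(U) \subset U_k$ is then automatic.

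The analytic heart is to show that the image loops escape and expand. I would first prove that $f^n \to \infty$ locally uniformly on $U$. By normality on $U$, any subsequence of $(f^n)$ converges locally uniformly either to $\infty$ or to a finite holomorphic limit; a finite limit is impossible because it would keep $\big(\max_\gamma |f^n|\big)$ bounded along the subsequence, whereas transcendence forces $M(r,f)/r \to \infty$ and, quantitatively, $\log M(r,f)/\log r \to \infty$. To convert this into nesting and escape I would invoke a minimum-modulus estimate of the type furnished by the $\cos\pi\rho$ theorem: along a sequence of arbitrarily large radii $r$ one can arrange $m(r,f) = \min_{|z|=r}|f(z)|$ to be as large as desired. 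A loop winding once around $0$ inside $\{|z| \le r\}$ is then mapped to a loop winding around $0$ out to modulus at least $m(r,f)$, so the successive loops $f^{k}(\gamma)$ wind around $0$, satisfy $\min_{z \in f^k(\gamma)}|z| \to \infty$, and $f^{k+1}(\gamma)$ surrounds $f^{k}(\gamma)$ for large $k$. This gives the third bullet and, since $f^{k+1}(U) \supset f^{k+1}(\gamma)$ encloses the connected set $f^k(U) \supset f^k(\gamma)$, the surrounding statement of the second bullet; together with the max-principle upper bound it also confines each $U_k$ to a bounded annular region, supplying the boundedness needed for the first bullet.

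I expect the growth/minimum-modulus estimate to be the main obstacle. The topological surrounding via the open mapping theorem and maximum principle, and the passage from loops to components, are routine once the escape is known; the delicate point is to show that the image loops genuinely march out to infinity rather than oscillating within a fixed annular range, which requires a lower bound on $\min_{|z|=r}|f|$ along a suitable sequence of large circles. I would obtain this via the $\cos\pi\rho$ theorem, following Baker's original treatment \cite{MR759304}.
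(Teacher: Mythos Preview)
The paper does not prove this lemma; it is attributed to Baker and simply cited as \cite{MR759304}. Your outline---take a non-contractible loop $\gamma\subset U$ enclosing a point of $J(f)$, push it forward, and use growth to force escape and nesting---is indeed Baker's, so in spirit you are aligned with what the paper invokes.

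That said, your justification of $f^n\to\infty$ on $U$ has a genuine gap. You argue that a finite subsequential limit would bound $\max_\gamma|f^{n_k}|$ while ``transcendence forces $M(r,f)/r\to\infty$''; but the growth of $M(r,f)$ on large circles says nothing about iterates on the fixed compact curve $\gamma$, so this is a non sequitur, and a bounded subsequence on $\overline V$ is not in itself a contradiction. The correct step uses that $V$ is a neighbourhood of a point of $J(f)$: by the blowing-up property (Lemma~\ref{Lblow}), $f^n(V)$ eventually covers any compact set disjoint from $E(f)$, whence $\max_\gamma|f^n|=\max_{\overline V}|f^n|\to\infty$ along the \emph{full} sequence, and normality on $U$ then forces every locally uniform limit to be $\infty$. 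Your minimum-modulus formulation of the nesting step is also garbled as written (a loop ``inside $\{|z|\le r\}$'' being sent ``out to modulus at least $m(r,f)$'' is not what $m(r,f)$ controls, and $m(r,f)$ need not be large for arbitrary transcendental $f$, so the $\cos\pi\rho$ theorem alone is insufficient); but since you explicitly defer that analytic input to \cite{MR759304}, the essential repair needed is the escape argument above.
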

Next, we need the following characterisation of a disconnected subset of the plane.
\begin{lemma}\cite[Lemma 3.1]{MR2797684}
\label{Lasse}
Suppose that $S \subset \mathbb{C}$. Then $S$ is disconnected if and only if there is a closed connected set $A \subset \mathbb{C}$ such that $S \cap A = \emptyset$ and at least two different components of $\mathbb{C} \backslash A$ intersect $S$.
\end{lemma}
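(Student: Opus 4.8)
The statement is a purely topological characterisation, and I would prove the two implications separately. The reverse implication is the routine one: suppose such a closed connected set $A$ exists. Since $A$ is closed and $\mathbb{C}$ is locally connected, every component of the open set $\mathbb{C}\setminus A$ is itself open. By hypothesis there is a component $V$ of $\mathbb{C}\setminus A$ that meets $S$ while some other component also meets $S$. As $A\cap S=\emptyset$ we have $S\subseteq\mathbb{C}\setminus A$, so $S$ is the disjoint union of the two relatively open sets $S\cap V$ and $S\cap(\mathbb{C}\setminus(A\cup V))$, both of which are non-empty. Hence $S$ is disconnected.

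For the forward implication I would start from a disconnection $S=S_1\sqcup S_2$, where $S_1,S_2$ are non-empty and relatively clopen in $S$. Then $S_1$ and $S_2$ are separated, in the sense that $\overline{S_1}\cap S_2=\emptyset=S_1\cap\overline{S_2}$, and so the sets
$$ U_1=\{z : d(z,\overline{S_1})<d(z,\overline{S_2})\}, \qquad U_2=\{z : d(z,\overline{S_2})<d(z,\overline{S_1})\} $$
are disjoint open sets with $S_1\subseteq U_1$, $S_2\subseteq U_2$ and $\overline{U_1}\cap S_2=\emptyset$. I would then fix $p\in S_1$ and $q\in S_2$ and let $W$ be the connected component of $U_1$ containing $p$; thus $W$ is a domain with $p\in W$ and $q\notin\overline{W}$. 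Since the boundary of a component of an open set lies in the boundary of that open set, $\partial W\subseteq\partial U_1$, and in particular $\partial W\cap S=\emptyset$.

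The remaining task is to replace the (possibly disconnected) separator $\partial W$ by a connected one. Working in the Riemann sphere $\sphere$, let $\Omega$ be the component of $\sphere\setminus\overline{W}$ containing $q$, the closure being taken in $\sphere$. The plan is to invoke the classical fact that the boundary of each component of the complement of a continuum in $\sphere$ is connected; applied to $\overline{W}$ and $\Omega$ this gives that $\partial\Omega$ is a continuum. Setting $A=\partial\Omega\cap\mathbb{C}$, one checks that $A$ is closed in $\mathbb{C}$, that $A\subseteq\partial W$ so that $A\cap S=\emptyset$, and that $p$ and $q$ lie in distinct components of $\mathbb{C}\setminus A$: indeed $q\in\Omega$ while $p\in W$, and $W$ is disjoint from $\overline{\Omega}$, so any connected subset of $\mathbb{C}\setminus A$ meeting both $\Omega$ and $p$ would split across $\partial\Omega\subseteq A$. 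Since $p\in S_1$ and $q\in S_2$, these two components each meet $S$, as required.

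The hard part is exactly this passage to a connected separator, together with the behaviour at infinity. When $\overline{W}$ is bounded it is a continuum in $\mathbb{C}$, $\partial\Omega$ is automatically a bounded continuum contained in $\mathbb{C}$, and the argument above is complete; this already settles the case in which one of $S_1,S_2$ is bounded, since then the corresponding neighbourhood, and hence $W$, may be taken bounded. The delicate case is when both $S_1$ and $S_2$ are unbounded, so that $\infty\in\overline{W}$ and $\infty$ may lie on $\partial\Omega$. Deleting $\infty$ preserves the separation of $p$ from $q$ in $\mathbb{C}$ (and preserves $A\cap S=\emptyset$, as $\infty\notin S$), but it could in principle disconnect $\partial\Omega$ if $\infty$ were a cut point. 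I expect the crux of a complete proof to be showing that the separating continuum can always be chosen so that its trace on $\mathbb{C}$ stays connected, handled either by a careful choice of the complementary component $\Omega$ or by a direct separation theorem of Janiszewski type guaranteeing a connected separator in the plane.
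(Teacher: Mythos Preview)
The paper does not prove this lemma at all; it is quoted verbatim from Rempe's paper \cite{MR2797684} and used as a black box. So there is no in-paper argument to compare yours against.

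On your attempt itself: the reverse implication is correct, and for the forward implication your construction of $U_1,U_2$ and of the component $W$ is sound, as is the observation that $\partial W\subset\partial U_1$ misses $S$. You have also correctly isolated the only genuine difficulty, namely passing from the closed separator $\partial W$ to a \emph{connected} closed separator in $\mathbb{C}$. However, your proposed route through $\widehat{\mathbb{C}}$ does not close this gap: $\partial\Omega$ is indeed a continuum in the sphere, but if $\infty\in\partial\Omega$ there is no mechanism in your argument preventing $\infty$ from being a cut point, and then $A=\partial\Omega\cap\mathbb{C}$ can be disconnected. Your final paragraph is an honest acknowledgement that the proof is incomplete precisely here.

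The standard way to finish is to stay in the plane and invoke the separation theorem that underlies Janiszewski's theorem (equivalently, the unicoherence of $\mathbb{C}$): if a closed set $F\subset\mathbb{C}$ separates two points $p$ and $q$, then some connected component of $F$ already separates them. Applying this with $F=\partial W$ (or simply $F=\mathbb{C}\setminus(U_1\cup U_2)$), which is closed, disjoint from $S$, and separates $p\in S_1$ from $q\in S_2$, immediately yields the required closed connected $A$. This avoids the detour through $\widehat{\mathbb{C}}$ and the attendant problem at $\infty$.
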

The third result we need is the well-known `blowing-up' property of the Julia set (see \cite[Section 2]{MR1216719}, for example). Here $E(f)$ is the \emph{exceptional set} of $f$, which consists of the set of points with a finite backwards orbit under $f$. For a {\tef}, $E(f)$ contains at most one point.
\begin{lemma}
\label{Lblow}
Let $f$ be a transcendental entire function, let $K$ be a compact set with $K \cap E(f) = \emptyset$, and let $\Delta$ be a neighbourhood of $z \in J( f)$. Then there exists $N\isnatural$ such that $f^n(\Delta) \supset K$, for $n \geq N$. 
\end{lemma}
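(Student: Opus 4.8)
This is a standard consequence of Montel's theorem, so the plan is to run the classical argument, being careful about the role of the exceptional set. First I would reduce to a simpler target: cover the compact set $K$ by finitely many open discs, so that it suffices to prove the statement when $K$ is replaced by a single closed disc $\overline{D}$ with $\overline{D}\cap E(f)=\emptyset$ (if the original $K$ is contained in the union of the interiors of discs $D_1,\dots,D_m$ and $f^n(\Delta)\supset \overline{D_j}$ for $n\ge N_j$, take $N=\max_j N_j$). So from now on assume $K=\overline{D}$.

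Next I would set up the normality contradiction. Suppose, for a contradiction, that there is a strictly increasing sequence $n_k$ with $f^{n_k}(\Delta)\not\supset \overline{D}$ for every $k$; pick $w_k\in \overline{D}\setminus f^{n_k}(\Delta)$, and by compactness pass to a subsequence so that $w_k\to w\in \overline{D}$. The point is that we want a fixed value $\zeta$, near $w$, omitted by all $f^{n_k}$ on $\Delta$. Here I would invoke the hypothesis $\overline{D}\cap E(f)=\emptyset$: since $E(f)$ contains at most one point and $w\notin E(f)$, the backward orbit $\bigcup_{m\ge 0}f^{-m}(\{w\})$ is infinite, so it contains two points $a\ne b$, neither of which is a critical value issue — more simply, I would choose two distinct points $\alpha,\beta$ in the backward orbit of $w$. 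A cleaner route: choose a small closed disc $\overline{D'}\subset D$ centred at $w$ with $\overline{D'}\cap E(f)=\emptyset$; then $\bigcup_m f^{-m}(\overline{D'})$ omits at most the single exceptional point, so in particular some iterate $f^{-m_0}$ of the {\em set} $\overline{D'}$ contains at least two points, giving two distinct preimages. Actually the standard and most economical phrasing is: the family $\{f^{n_k}|_\Delta\}$ omits, on $\Delta$, every point of $f^{-n_k}$-nothing — instead, observe directly that for large $k$ the value $w_k$ is omitted by $f^{n_k}$ on $\Delta$, and since the $w_k$ accumulate at a non-exceptional point $w$, a neighbourhood argument produces two fixed distinct values $c_1,c_2$ each omitted by infinitely many $f^{n_k}$ on $\Delta$.

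Then I would conclude: by Montel's theorem, a family of holomorphic functions on $\Delta$ omitting two fixed distinct values is normal, so $\{f^{n_k}\}$ (restricted to a subsequence along which the same pair $c_1,c_2$ is omitted) is normal on $\Delta$. But $\Delta$ is a neighbourhood of a point $z\in J(f)$, so $\{f^n\}$ is not normal at $z$, and no subsequence can be normal on any neighbourhood of $z$ either — this is a contradiction. Hence for all sufficiently large $n$ we have $f^n(\Delta)\supset \overline{D}$, and undoing the reduction gives $f^n(\Delta)\supset K$ for $n\ge N$.

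\emph{Main obstacle.} The only genuinely delicate point is the step that extracts two \emph{fixed} omitted values from the varying omitted values $w_k$; this is exactly where the hypothesis $K\cap E(f)=\emptyset$ is used, via the fact that for a transcendental entire function $E(f)$ is a single point (or empty). Everything else — the compactness reduction, Montel, and the definition of $J(f)$ — is routine, so in the write-up I would cite \cite[Section 2]{MR1216719} for the details and only spell out the reduction to a single disc.
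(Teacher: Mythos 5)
The paper does not prove this lemma at all: it is quoted as standard with a pointer to \cite[Section 2]{MR1216719}, so there is no in-paper argument to compare against. Your overall strategy (reduce to a single closed disc, then run a normal-families contradiction) is the classical route, and the covering reduction and the appeal to Montel are fine; but the two middle steps are genuinely incomplete. First, the production of two omitted values. You flag this as the delicate point but then offer three half-arguments without completing any, and the version you settle on --- two \emph{fixed} values $c_1,c_2$ each omitted by infinitely many $f^{n_k}$ on $\Delta$ --- is not obtainable: a priori the only value omitted by $f^{n_k}$ on $\Delta$ could be exactly $w_k$, and the $w_k$ may be pairwise distinct, so no fixed value need be omitted more than once. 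The correct mechanism is to pull back by a \emph{fixed} iterate: since $w\notin E(f)$ its backward orbit is infinite, and by Picard's theorem (at most one value of a transcendental entire function has finitely many preimages) there exist a single $m$ and distinct points $a\neq b$ with $f^m(a)=f^m(b)=w$. Choosing disjoint closed discs $\overline{A}\ni a$, $\overline{B}\ni b$, the open mapping theorem gives, for large $k$, points $a_k\in A$, $b_k\in B$ with $f^m(a_k)=f^m(b_k)=w_k$, and then $f^{n_k-m}$ omits both $a_k$ and $b_k$ on $\Delta$. These are still moving targets, so you need the variable-targets form of Montel (renormalise by $\zeta\mapsto(\zeta-a_k)/(b_k-a_k)$, which is harmless since $a_k,b_k$ stay in disjoint compacta).

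Second, the final contradiction. You assert that because $z\in J(f)$, ``no subsequence can be normal on any neighbourhood of $z$ either.'' That does not follow from the definition of $J(f)$: a subfamily of a non-normal family can perfectly well be normal. The assertion is true for subsequences of iterates with $n_k\to\infty$, but it needs an argument --- for instance, by Baker's theorem \cite{Bak68} the neighbourhood $\Delta$ contains a repelling periodic point $p$ of some period $q$; along any sequence $n_k\to\infty$ the points $f^{n_k}(p)$ stay in the finite orbit of $p$ while $|(f^{n_k})'(p)|\to\infty$, which is incompatible with locally uniform convergence of any sub-subsequence to a limit that is finite (hence holomorphic) at $p$. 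With these two repairs the proof is complete; alternatively, simply citing \cite[Section 2]{MR1216719}, as you propose at the end, would match the paper's own treatment of this lemma.
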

Finally we need the following generalisation of \cite[Lemma 1]{MR2792984}, which was proved in \cite{Sixsmithmax}.
\begin{lemma}
\label{RSlemm}
Suppose that $(E_n)_{n\isnatural}$ is a sequence of compact sets and $(m_n)_{n\isnatural}$ is a sequence of positive integers. Suppose also that $f$ is a {\tef} such that $E_{n+1} \subset f^{m_n}(E_n )$, for $n\isnatural$. Set $p_n = \sum_{k=1}^n m_k$, for $n\isnatural$. Then there exists $\zeta\in E_1$ such that 
\begin{equation}
\label{feq}
f^{p_n}(\zeta) \in E_{n+1}, \qfor n\isnatural.
\end{equation}

If, in addition, $E_n \cap J(f) \ne \emptyset$, for $n\isnatural$, then there exists $\zeta \in E_1 \cap J(f)$ such that (\ref{feq}) holds.
\end{lemma}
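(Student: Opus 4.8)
The plan is to construct, for each $n$, the set of points in $E_1$ whose orbit correctly visits the first $n$ target sets, to show that each such set is nonempty and compact, and then to extract a single point lying in all of them by a nested-intersection argument. Concretely, I would define
$$
  A_n = E_1 \cap \bigcap_{k=1}^{n} \left( f^{p_k} \right)^{-1}(E_{k+1}), \qfor n\isnatural,
$$
so that $A_n$ consists exactly of those $z \in E_1$ with $f^{p_k}(z) \in E_{k+1}$ for $1 \le k \le n$. Since each $E_{k+1}$ is compact (hence closed), each $f^{p_k}$ is continuous, and $E_1$ is compact, each $A_n$ is a closed subset of the compact set $E_1$ and is therefore itself compact; moreover the sets are clearly nested, with $A_{n+1} \subset A_n$. (Here I assume, as is implicit in the statement, that each $E_n$ is nonempty.)

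The main step is to show that each $A_n$ is nonempty, and this is where the hypothesis $E_{n+1} \subset f^{m_n}(E_n)$ is used, via a finite backwards construction. Fixing $n$, I would choose any $w_{n+1} \in E_{n+1}$; because $E_{n+1} \subset f^{m_n}(E_n)$, there is a point $w_n \in E_n$ with $f^{m_n}(w_n) = w_{n+1}$, and applying the same inclusion repeatedly I would select $w_k \in E_k$ with $f^{m_k}(w_k) = w_{k+1}$ for $k = n, n-1, \dots, 1$. A short induction using $p_k = p_{k-1} + m_k$ then gives $f^{p_k}(w_1) = w_{k+1} \in E_{k+1}$ for $1 \le k \le n$, so $w_1 \in A_n$. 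Thus the $A_n$ form a decreasing sequence of nonempty compact sets, and by the finite-intersection property (Cantor's intersection theorem) the set $\bigcap_{n\isnatural} A_n$ is nonempty. Any $\zeta$ in this intersection lies in $E_1$ and satisfies (\ref{feq}).

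For the final assertion I would run exactly the same argument inside the Julia set, using its complete invariance. I would set $A_n' = A_n \cap J(f)$; since $J(f)$ is closed, each $A_n'$ is again compact, and the sets remain nested. To see that $A_n'$ is nonempty, I would repeat the backwards construction but starting from a point $w_{n+1} \in E_{n+1} \cap J(f)$, which exists by hypothesis. The key observation is that $f^{m_k}(w_k) = w_{k+1} \in J(f)$ forces $w_k \in J(f)$, since $J(f)$ is backwards invariant; hence $w_k \in E_k \cap J(f)$ at every stage, and in particular $w_1 \in A_n'$. Forward invariance of $J(f)$ guarantees the intermediate iterates also lie in $J(f)$, so no further condition is needed, and applying Cantor's theorem to $(A_n')$ yields $\zeta \in E_1 \cap J(f)$ satisfying (\ref{feq}).

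I expect the only real subtlety to be the bookkeeping in the backwards construction \---\ making sure the indices $p_k$ and $m_k$ line up so that $f^{p_k}(w_1) = w_{k+1}$ \---\ together with the observation that compactness of each $E_n$ is exactly what is needed to pass from nonemptiness of the finite-stage sets $A_n$ to nonemptiness of their intersection. No deep complex dynamics enters beyond the complete invariance and closedness of $J(f)$ in the final part.
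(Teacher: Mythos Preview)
Your argument is correct and is the standard nested-compact-sets proof of this type of result. Note, however, that the paper does not actually prove Lemma~\ref{RSlemm}: it is stated there without proof and attributed to \cite{Sixsmithmax} (as a generalisation of \cite[Lemma~1]{MR2792984}). The argument you give is essentially the one used in those references, so there is nothing to compare; your write-up would serve perfectly well as a self-contained proof here.
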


\begin{proof}[Proof of Theorem~\ref{Tconnectnessandboundedness}]
For part (a), suppose that $f$ is a {\tef} and that $\overline{BU(f)}$ has a bounded component $B$, say. Then there exists a bounded domain~$A$, homeomorphic to an annulus, that surrounds $ B $ and lies in $ BU(f)^c $. Since $BU(f)$ is an infinite set and is completely invariant, it follows from Montel's theorem that $A$ lies in a component of $F(f)$, and since $ B $ meets $ J(f) $ by Theorem~\ref{Tbasicresults}(b), this component of $ F(f) $ must be multiply connected.

On the other hand, if $f$ has a multiply connected Fatou component $U$, then it is immediate from Lemma \ref{baker} that all components of $\overline{BU(f)}$ are bounded. \\

Our proof of part (b) of the theorem uses techniques similar to the proof of \cite[Theorem 1.3]{2012arXiv1208.3692O}. Suppose that $BU(f) \cap J(f)$ is disconnected. It follows from Lemma~\ref{Lasse} that there exists a closed connected set $\Gamma \subset (BU(f) \cap J(f))^c$ with two complementary components, $G_1$ and $G_2$ say, each of which contains points in $BU(f) \cap J(f)$.

For $i = 1,2$, let $H_i$ be a bounded domain, compactly contained in $G_i$, such that $H_i \cap J(f) \ne \emptyset$. Since $J(f)$ is a perfect set, we can assume that neither $\overline{H}_1$ nor $\overline{H}_2$ meets $E(f)$.

Now suppose that $z_1$ is an arbitrary point of $J(f)$, and that $V_1$ is a neighbourhood of $z_1$. Let $(z_n)_{n \geq 2}$ be a sequence of points in $J(f)\backslash E(f)$ such that $z_n\rightarrow\infty$ as $n\rightarrow\infty$. For $ n \geq 2 $, let $ V_n $ be a bounded  neighbourhood of $z_n$.  Evidently we can choose the $ (V_n)_{n \geq 2} $ to be pairwise disjoint and such that $\overline{V}_n \cap E(f) = \emptyset$, for $ n \geq 2 $. Then, by Lemma~\ref{Lblow}, for each $n\isnatural$ there exists $t_n, u_n \isnatural$ such that 
$$
   \overline{H}_1 \cup \overline{H}_2 \subset f^{t_n}(V_n)
$$
and
$$
   \overline{V}_{n+1} \subset f^{u_n}(H_1) \cap f^{u_n}(H_2).
$$

Now let $\underline{s} = s_1 s_2 s_3 \ldots$ be an infinite sequence of $1$s and $2$s. We show that each such sequence $\underline{s}$ can be associated with the orbit of a point in $\overline{V_1} \cap BU(f) \cap J(f)$.  For $n\isnatural$ and $ k = \tfrac12 n$, define
$$
   E_n = \begin{cases}
           &\overline{V}_{k + \frac12}, \quad\quad \, \text{  if  } n \text{ is odd}, \\
           &\overline{H}_{s_k}, \quad\quad\quad \text{ if } n \text{ is even},
         \end{cases}  
$$
and
$$
   m_n = \begin{cases}
           &t_{k + \frac12}, \quad\quad \, \, \text{  if  } n \text{ is odd}, \\
           &u_k, \quad\quad\quad \, \, \text{ if } n \text{ is even}.
         \end{cases}  
$$
Then $E_{n+1} \subset f^{m_n}(E_n )$, for $n\isnatural$, so it follows from Lemma~\ref{RSlemm} that there exists a point $\zeta\in \overline{V}_1 \cap J(f)$, the orbit of which visits $\overline{H}_1 \cup\overline{H}_2$ infinitely often, and also visits each $\overline{V}_n$. Hence $\zeta \in BU(f)$.

Now points in $\overline{V}_1 \cap BU(f) \cap J(f)$ whose orbits are associated with two different infinite sequences of $1$s and $2$s must lie in different components of $BU(f) \cap J(f)$. For if two such sequences differ, then it is easy to see that there exists $m\isnatural$ such that the $m$th iterate of one point lies in $G_1$ and the $m$th iterate of the other lies in~$G_2$. Thus, if the two points were in the same component $B$ of $BU(f) \cap J(f)$, then $f^m(B)$ would meet $\Gamma \subset (BU(f)\cap J(f))^c$, which is a contradiction.

Finally, there are uncountably many possible infinite sequences of $1$s and $2$s, so we have shown that every neighbourhood of an arbitrary point in $J(f)$ meets uncountably many distinct components of $BU(f) \cap J(f)$, as required. \\

For part (c), suppose that $BU(f)$ is disconnected. As in the proof of part (b), it follows from Lemma~\ref{Lasse} that there exists a closed connected set $\Gamma \subset BU(f)^c$ with two complementary components, $G_1$ and $G_2$ say, each of which contains points in~$BU(f)$. 

We claim that each of $G_1$ and $G_2$ must contain points in $J(f)$. For suppose not.  Then we can assume without loss of generality that $G_1 \subset F(f)$. Let $U$ be the Fatou component containing $G_1$. Then $ U \subset BU(f) $ by Theorem~\ref{Tbasicresults}(a), and indeed $ U = G_1 $ since $ \partial G_1 \subset \Gamma \subset BU(f)^c. $ Thus $G_1$ is a component of $BU(f)$ that is also a Fatou component, which is impossible by Corollary~\ref{CboundaryinBU}. This contradiction establishes our claim. 

The remainder of the proof proceeds exactly as for part (b), but we conclude that points in $ \overline{V}_1 \cap BU(f) $ whose orbits are associated with different infinite sequences of 1s and 2s must lie in different components of $ BU(f). $  It then follows that every neighbourhood of an arbitrary point in $ J(f) $ meets uncountably many distinct components of $ BU(f), $ as required.
\end{proof}

\begin{remarks}\normalfont
\begin{enumerate}[(1)]
\item The argument in the proof of Theorem~\ref{Tconnectnessandboundedness}(a) is due to Eremenko~\cite{MR1102727}, who used it to prove that, for a \tef\ $ f $, all components of $ \overline{I(f)} $ are unbounded.  Using the same technique, it is easy to show that, if $ f $ has no \mconn\ Fatou components, then all components of $\overline{K(f)}$ are unbounded, whilst otherwise all components of $\overline{K(f)}$ are bounded.\\
\item Eremenko also conjectured in \cite{MR1102727} that every component of $ I(f) $ is unbounded, and this conjecture remains open.  If $ f $ is a \tef\ with no \mconn\ Fatou components, one can similarly ask whether every component of $ BU(f) $ is unbounded.  We give examples in Section~\ref{S4} to show that this is not the case, and indeed that $ BU(f) $ can be totally disconnected.  Similar remarks apply to $ K(f) $; examples of \tef s for which $ K(f) $ is totally disconnected were given in \cite{2012arXiv1208.3692O}.
\end{enumerate}
\end{remarks}

%
%

%
%

\section{Boundaries of wandering domains}
\label{S3}
In this section we prove Theorem \ref{TboundaryinBU} by first proving Theorem \ref{Tboundaryingeneral}, which is a more general result on the behaviour of iterates of a \tef~$ f $ on the boundary of a wandering domain.  

We first define the term \emph{harmonic measure}, used in the statements of Theorems~\ref{TboundaryinBU} and~\ref{Tboundaryingeneral}. If $U \subset \mathbb{C}$ is a domain, $z \in U$, and $E \subset \partial U$, then the \emph{harmonic measure at $z$ of $E$ relative to $U$}, which we denote by $\omega(z,E,U)$, is the value at $z$ of the solution to the Dirichlet problem in $U$ with boundary values equal to the characteristic function $\chi_E$. We refer to, for example, \cite{MR2150803, MR1334766} for further details. If there exists $z_0 \in U$ such that $\omega(z_0,E,U) = 0$, then $\omega(z,E,U) = 0$ for all $z \in U$. In this case we say that the set $E$ has \emph{harmonic measure zero relative to $U$}. 

We require the following more general result related to \cite[Theorem 1.1]{MR2801622}, which emerged from discussions with Rippon and Stallard. We denote the Riemann sphere by $\sphere := \mathbb{C} \cup \{ \infty \}$, write $d(z,w)$ for the spherical distance between $z, \, w \in \sphere$, and define the spherical distance between a point $z \in \sphere$ and a set $U\subset\sphere$ by $d(z,U)~=~\inf_{w\in U} d(z,w)$. We assume that $d(. , .)$ is normalised so that $d(0, \infty) = 1$.
\begin{lemma}
\label{bdrylemm}
Let $(G_n)_{n\geq 0}$ be a sequence of disjoint simply connected domains in~$ \sphere $. Suppose that, for each $n\isnatural$, $g_n : \overline{G}_{n-1} \to \overline{G}_n$ is analytic in $G_{n-1}$, continuous in $\overline{G}_{n-1}$, and satisfies $g_n(\partial G_{n-1})  \subset \partial G_n$. Let $$h_n = g_n \circ \cdots \circ g_2 \circ g_1, \qfor n\isnatural.$$
Suppose that there exist $\xi\in \sphere$, $\rho \in (0,1)$, $N\isnatural$ and $z_0 \in G_0$ such that 
$$d(h_n(z_0),\xi) < \rho, \qfor n\geq N.$$ 
Suppose finally that $c > 1$, and let 
$$H = \{ z \in \partial G_0 : d(h_n(z),\xi) \geq c\rho \text{ for infinitely many values of } n \}.$$ 
Then $H$ has harmonic measure zero relative to $G_0$.
\end{lemma}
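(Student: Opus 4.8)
The plan is to exploit the conformal invariance of harmonic measure to transfer everything to the unit disc, where harmonic measure on the boundary is just normalised Lebesgue measure, and then estimate using a normal-families / Schwarz–Pick argument combined with a Borel–Cantelli-type summation. Let me set out the steps.

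First I would use the Riemann map $\phi : \mathbb{D} \to G_0$ (taking care of the case where $G_0$ is a domain in $\widehat{\mathbb{C}}$, which is still simply connected so a Riemann map exists). Since harmonic measure is conformally invariant, $H$ has harmonic measure zero relative to $G_0$ if and only if $\phi^{-1}(H)$ (suitably interpreted via radial limits, using the fact that $\phi$ has radial limits a.e. and $\phi$ extends continuously where $\partial G_0$ is nice — but we should not assume that, so really one works with the radial-limit set) has Lebesgue measure zero on $\partial\mathbb{D}$. So it suffices to show that the set of $\theta$ for which $\lim_{r\to 1} \phi(re^{i\theta})$ exists, lies in $\partial G_0$, and satisfies $d(h_n(\phi(re^{i\theta})\text{-limit}),\xi)\geq c\rho$ infinitely often, has measure zero. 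Equivalently, writing $F_n = h_n \circ \phi : \mathbb{D} \to G_n$, we have a sequence of analytic maps $F_n : \mathbb{D} \to G_n$ with $\widehat{\mathbb{C}}\setminus G_n$ containing at least (in fact the $G_n$ are disjoint, so each omits the others) a definite amount, and we know $d(F_n(\phi^{-1}(z_0)),\xi) < \rho$ for $n \geq N$.

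The key estimate: fix $w_0 = \phi^{-1}(z_0) \in \mathbb{D}$. Because $F_n$ omits the two points $\xi$ and, say, some point of $G_{n+1}$ (which could move with $n$) — actually better: the $G_n$ are pairwise disjoint, so $F_n$ omits all of $G_0$ except possibly... no. Let me instead use that $F_n$ maps into $G_n$, and the $G_n$ are disjoint so they are "uniformly thin" somewhere? That is not quite enough. The cleaner route, which I expect is the intended one, is: since $d(F_n(w_0),\xi)<\rho<1$ for $n\geq N$, and $d(0,\infty)=1$, the image $F_n(\mathbb{D})\subset G_n$ together with the disjointness gives that $F_n$ is a normal family in a Montel sense — actually one uses the hyperbolic-metric estimate. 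The set $H' = \{\theta : d(F_n(e^{i\theta}),\xi)\geq c\rho \text{ i.o.}\}$ can be controlled by noting that, for each fixed $n$, the harmonic measure (= Lebesgue measure on $\partial\mathbb{D}$) of $\{\theta : d(F_n(e^{i\theta}),\xi) \geq c\rho\}$ is small: indeed $u_n(w) := \log\frac{1}{d(F_n(w),\xi)}$-type quantity, or better the function $\omega(w, \{z : d(z,\xi)\geq c\rho\}, G_n)$ pulled back — since $F_n(w_0)$ is within $\rho$ of $\xi$ and we want the boundary portion at distance $\geq c\rho$, the two-constants theorem / Harnack gives a bound like $\text{meas}\{\theta : d(F_n(e^{i\theta}),\xi)\geq c\rho\} \leq C\lambda^n$ or at least summable, using that the modulus of the annular region separating $B(\xi,\rho)$ from $\{d\geq c\rho\}$ inside $G_n$... hmm, this is where one must be careful because $G_n$ could be wild.

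**The honest version of the argument**, and the step I expect to be the main obstacle, is getting a bound on $\omega(w_0, A_n, \mathbb{D})$ where $A_n = \{e^{i\theta} : d(F_n(e^{i\theta}),\xi)\geq c\rho\}$ that is summable in $n$ — or at least good enough that a Borel–Cantelli argument shows $\limsup_n A_n$ has measure zero. The right tool is: let $\psi_n : \widehat{\mathbb{C}}\setminus\{\xi\} \to$ something, or more simply work with $v_n(w) = \omega(F_n(w), \{z \in \partial G_n : d(z,\xi)\geq c\rho\}, G_n)$, which is harmonic and bounded in $\mathbb{D}$ after composing with $F_n$ (since $F_n : \mathbb{D}\to G_n$ and $v_n\circ F_n$ is harmonic, between $0$ and $1$, with boundary values $\geq \chi_{A_n}$ a.e. on $\partial\mathbb{D}$ by the boundary behaviour). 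Thus $\omega(w_0, A_n,\mathbb{D}) \leq v_n(F_n(w_0))$. Now $v_n$ is the harmonic measure in $G_n$ of the part of $\partial G_n$ far from $\xi$, evaluated at a point $F_n(w_0)$ which is $\rho$-close to $\xi$; by Beurling's projection theorem or a standard estimate, $v_n(z) \leq \frac{C}{\pi}\arcsin$-type bound that is $\leq 1/2$ say, but not obviously summable. To get summability one iterates: since $d(h_n(z_0),\xi)<\rho$ for ALL large $n$ and the $G_n$ are disjoint, $F_{n+1} = g_{n+1}\circ F_n$ with each $g_{n+1}$ a contraction of hyperbolic metrics between disjoint domains — the composition $h_n$ contracts the hyperbolic metric of $G_0$ strictly, and by disjointness there is a definite contraction factor $\lambda<1$ each time the orbit passes through a "bounded-geometry" location. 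This should force $\omega(w_0, A_n, \mathbb{D})$ to decay geometrically, giving $\sum_n \omega(w_0,A_n,\mathbb{D}) < \infty$, hence by Borel–Cantelli (for harmonic measure: $\omega(w_0, \limsup A_n, \mathbb{D}) \leq \limsup \sum_{k\geq n}\omega(w_0,A_k,\mathbb{D}) = 0$) we get $\omega(w_0, \phi^{-1}(H),\mathbb{D})=0$, and transferring back finishes the proof. **The hard part** is making the "definite contraction each step" rigorous without extra hypotheses on the $G_n$ — I expect the actual proof instead combines the $\rho$ vs. $c\rho$ gap with a single application of the maximum principle to a cleverly chosen subharmonic function summed over $n$, or uses the Rippon–Stallard machinery from \cite{MR2801622} essentially verbatim, reducing to a statement about how often a hyperbolic geodesic ray can exit a ball of fixed radius.
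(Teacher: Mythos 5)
Your overall architecture is right---reduce to showing $\sum_n \omega(z_0, B_n, G_0) < \infty$, where $B_n$ is the set of points of $\partial G_0$ whose image under $h_n$ lies at distance at least $c\rho$ from $\xi$, and then conclude by a Borel--Cantelli argument---and your subordination step $\omega(z_0, B_n, G_0) \le \omega(h_n(z_0), E_n, G_n)$, with $E_n$ the part of $\partial G_n$ far from $\xi$, is exactly what the paper does (via \cite[Theorem 4.3.8]{MR1334766}; the detour through the Riemann map and radial limits is unnecessary, since one can work with harmonic measure in $G_0$ directly). But the step you flag as ``the hard part'' is indeed where your argument breaks down, and the resolution is not the one you guess. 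There is no geometric decay of the individual terms, and the hyperbolic-contraction heuristic cannot be made rigorous without extra hypotheses on the $G_n$: nothing in the hypotheses prevents $\omega(h_n(z_0), E_n, G_n)$ from tending to zero arbitrarily slowly, or indeed from staying bounded below if considered in isolation.

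The missing idea is that summability comes from disjointness, not from decay of individual terms. Normalise $\xi = 0$, choose $r < r'$ with $d(0,r)=\rho$ and $d(0,r')=c\rho$, and let $\Delta = \{|z|<r'\}$. Let $V_n$ be the component of $G_n \cap \Delta$ containing $h_n(z_0)$ and $F_n = \partial V_n \cap \{|z|=r'\}$. The maximum principle gives $\omega(z, E_n, G_n) \le \omega(z, F_n, \Delta)$ for $z \in V_n$, since to reach $E_n \subset \partial G_n \setminus \Delta$ from inside $V_n$ one must first cross $F_n$. Now the crucial point: because the $G_n$ are pairwise disjoint, so are the $V_n$, hence the $F_n$ are pairwise disjoint subsets of the \emph{single} circle $\partial\Delta$, and therefore $\sum_n \omega(0, F_n, \Delta) \le \omega(0,\partial\Delta,\Delta) = 1$. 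Since $|h_n(z_0)| < r < r'$ for $n \ge N$, Harnack's inequality transfers this from $0$ to the moving points $h_n(z_0)$ with the uniform constant $K = (r'+r)/(r'-r)$. Chaining these estimates yields $\sum_{n\ge N}\omega(z_0, B_n, G_0) \le K$, which is all that is needed. This is the Rippon--Stallard device from \cite{MR2801622}, and it uses the disjointness of the $G_n$ in exactly one place---to make the gates $F_n$ disjoint arcs on one circle.
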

\begin{proof}
By composing with a M\"{o}bius map if necessary, we can assume that $\xi = 0$. Let $r>0$ be such that $d(0,r) = \rho$. By hypothesis we have $|h_n(z_0)| < r$, for $n\geq N$. Let $r' > r$ be such that $d(0, r') = c\rho$. 

Define, for $n\geq N$,
\begin{equation}
\label{BRdef}
   B_n = \{ z \in \partial G_0 : |h_n(z)| \geq r' \}.
\end{equation}
It is easy to see that 
\begin{equation}
\label{Hdef}
H = \bigcap_{m\geq N} \bigcup_{n\geq m} B_n.
\end{equation}

Let $\Delta = \{ z : |z| < r' \}$, and define the following sets, for $n \geq N$:
\begin{itemize}
\item $E_n = \partial G_n \cap \{z : |z| \geq r'\}$;
\item $V_n$ to be the component of $G_n \cap \Delta$ that contains $h_n(z_0)$;
\item $F_n = \partial V_n \cap \{z : |z| = r'\}$.
\end{itemize}

We may assume that none of these sets are empty. By a similar argument to the proof of \cite[Equation (2.2)]{MR2801622} we have
\begin{equation}
\label{newwas2.2}
\omega(z,E_n,G_n) \leq \omega(z,F_n ,\Delta), \qfor z \in V_n, \ n \geq N.
\end{equation}

Since $|h_n(z_0)| < r$ for $n\geq N$, an application of Harnack's inequality \cite[Theorem 1.3.1]{MR1334766} gives
\begin{equation}
\label{newwas2.3}
\omega(h_n(z_0),F_n,\Delta) \leq K \omega(0,F_n ,\Delta), \qfor n \geq N,
\end{equation}
where $K = \dfrac{r'+r}{r' - r}$.  Since the sets $F_n$ are disjoint, it then follows from (\ref{newwas2.2}) and (\ref{newwas2.3}) that 
\begin{equation}
\label{newwas2.4}
\sum_{n \geq N} \omega(h_n(z_0),E_n,G_n) \leq K \sum_{n \geq N} \omega(0,F_n ,\Delta) \leq K \omega(0,\partial\Delta ,\Delta) = K.
\end{equation}

Now $h_n(G_0) \subset G_{n}$ and $h_n(B_n) \subset E_n$, so 
$$
    \omega(z_0, B_n, G_0) \leq \omega(h_n(z_0), E_n, G_n), \qfor n \geq N,
$$
by \cite[Theorem 4.3.8]{MR1334766}.  Thus, by (\ref{newwas2.4}),
$$
    \sum_{n \geq N} \omega(z_0, B_n, G_0) \leq \sum_{n \geq N} \omega(h_n(z_0), E_n, G_n) \leq K,
$$
and therefore
$
    \omega(z_0, B_n, G_0) \rightarrow 0 \text{ as } n\rightarrow\infty.
$ We deduce by (\ref{Hdef}) that $H$ has harmonic measure zero relative to $G_0$, as required.
\end{proof}

\begin{remark}\normalfont
Suppose that, with the hypotheses of Lemma~\ref{bdrylemm}, there exist $\xi \in \sphere$ and $z_0 \in G_0$ such that $h_n(z_0) \rightarrow\xi$ as $n\rightarrow\infty$. It is straightforward to show, by repeated application of the lemma, that the set  $$\{ z \in \partial G_0 : h_n(z) \not\rightarrow\xi \text{ as } n\rightarrow\infty \}$$ has harmonic measure zero with respect to $G_0$. We do not use this result here.
\end{remark}

We now proceed to the proof of Theorem~\ref{Tboundaryingeneral}.  Recall that, for a \tef\ $ f $, we write $\Lambda(z,f)$ for the $\omega$-limit set of the point $ z \in \C$, that is, the set of accumulation points in $\sphere$ of the orbit of $ z $ under $ f $.  For a wandering domain $ U $ of $f$, we can write $\Lambda(U,f)$ without ambiguity, since $\Lambda(z_1,f) = \Lambda(z_2,f)$ for $z_1, z_2 \in U$. Note that $ \Lambda(U,f) \subset J(f) \cup \{ \infty \}. $

\begin{proof}[Proof of Theorem~\ref{Tboundaryingeneral}]
Let $ f $ be a \tef\ and $ U $ be a wandering domain of $ f $. If $U$ is multiply connected, then $\overline{U} \subset I(f)$ by \cite[Theorem 2]{MR2117213}, so the theorem is immediate. We therefore assume in what follows that $U$ is simply connected. 

We show separately that each of the sets
$$\{ z \in \partial U : \Lambda(z,f) \not\subset \Lambda(U, f) \} \quad\text{and}\quad \{ z \in \partial U : \Lambda(z,f) \not\supset \Lambda(U, f) \}$$
is a countable union of sets of harmonic measure zero relative to $ U $. Since a countable union of sets of measure zero also has measure zero, it then follows that the set $ \{ z \in \partial U : \Lambda(z,f) \ne \Lambda(U, f) \} $ has harmonic measure zero relative to $ U $, as required. 

First we consider the set $\{ z \in \partial U : \Lambda(z,f) \not\subset \Lambda(U, f) \}$. For $\delta > 0$, denote by $ A_\delta $ the set of points whose spherical distance from $\Lambda(U,f)$ is at least $\delta$:
$$
   A_\delta := \{ z \in \sphere : d(z,\Lambda(U,f)) \geq \delta \}.
$$
Let $(\delta_p)_{p\isnatural}$ be a decreasing sequence of real numbers in $ (0,1) $, tending to zero, with $\delta_1$ sufficiently small that $A_{3\delta_1} \ne \emptyset$. This is possible since $\sphere\backslash \Lambda(U,f) \supset F(f).$

For each $p\isnatural$, we now choose a sequence $(\zeta_{p,q})_{q\isnatural}$ of points in $A_{3\delta_p}$ such that, with $T_{p,q}$ being the open spherical disc $T_{p,q}:= \{ z \in \sphere : d(z,\zeta_{p,q}) < \delta_p\}$, we have
$$
  A_{3\delta_p} \subset \bigcup_{q\isnatural} \{ z \in \sphere : d(z,\zeta_{p,q}) \leq \delta_p/2 \} \quad\text{and}\quad \bigcup_{q\isnatural} T_{p,q} \subset A_{2\delta_p}.
$$
(Clearly, for fixed $p$, a finite number of these discs is sufficient, but this is not required for our proof.)

For each $p, q\isnatural$, let $H_{p,q}$ be the set of points in the boundary of $U$ with an $\omega$-limit point in $T_{p,q}$. In other words
$$
   H_{p,q} := \{ z \in \partial U : f^n(z) \in T_{p,q}, \text{ for infinitely many values of } n \}.
$$

Fix a point $z_0 \in U$. For each $\delta>0$, we have $f^n(z_0) \notin A_\delta$ for all sufficiently large values of $n$. 

Suppose that $p, q \isnatural$ are fixed. Let $\xi$ be the point opposite to $\zeta_{p,q}$ on the Riemann sphere; in other words $\xi = -1/\overline{\zeta}_{p,q},$ with the obvious modification when $\zeta_{p,q} \in \{0, \infty\}$. Set $\rho = 1 - 2\delta_p$ and $c = (1 - \delta_p)/(1-2\delta_p) > 1$, in which case $T_{p,q} = \{ z : d(z,\xi) > c\rho \}$.  Since the spherical distance of $\zeta_{p,q}$ from $\Lambda(U,f)$ is at least $3\delta_p$, it follows that there exists $N \isnatural$ such that $d(f^n(z_0),\xi) < \rho$, for $n\geq N$. 

Set $G_0 = U$ and, for each $n\isnatural$, let $g_n = f$ and let $G_n$ be the component of $F(f)$ containing $f^n(U)$. Since $ U $ is a simply connected wandering domain, the domains $ (G_n)_{n \geq 0} $ are disjoint and simply connected, and moreover $g_n(\partial G_{n-1})\subset\partial G_{n}$, for $n\isnatural$. Thus we may apply Lemma~\ref{bdrylemm}, and we deduce that $H_{p,q}$ has harmonic measure zero relative to $U$, for all $p, q \isnatural$.

Now suppose that $z \in \partial U$, and that $\zeta \in \Lambda(z,f)$ is such that $\zeta \notin \Lambda(U,f)$. Since $\Lambda(U,f)$ is closed, $d(\zeta, \Lambda(U,f)) > 0$. Hence there exist $p, q\isnatural$ such that $\zeta \in \{ z \in \sphere : d(z,\zeta_{p,q}) \leq \delta_p/2\}$, so that  $f^n(z) \in T_{p,q}$ for infinitely many values of $n$. We deduce that 
$$
\{ z \in \partial U : \Lambda(z,f) \not\subset \Lambda(U, f) \} \subset \bigcup_{p,q \isnatural} H_{p,q}.
$$

Since $H_{p,q}$ has harmonic measure zero relative to $U$ for all $p, q \isnatural$, it follows by countable additivity that $  \{ z \in \partial U : \Lambda(z,f) \not\subset \Lambda(U, f) \} $ also has harmonic measure zero relative to $U$.

Now we consider the set $\{ z \in \partial U : \Lambda(z,f) \not\supset \Lambda(U, f) \}$, and let $(\delta_p)_{p\isnatural}$ be the sequence introduced earlier.  For each $p\isnatural$, we choose a sequence $(\zeta'_{p,q})_{q\isnatural}$ of points in $\Lambda(U,f)$ such that, with $T'_{p,q}$ being the open spherical disc $T'_{p,q} = \{ z \in \sphere : d(z,\zeta'_{p,q}) < \delta_p\}$, we have
$$
  \Lambda(U,f) \subset \bigcup_{q\isnatural} {T'_{p,q}}.
$$
(Again, for fixed $p$, a finite number of these discs is sufficient, but this fact is not required for our proof).

For each $p, q\isnatural$, let $H'_{p,q}$ be the set of points in the boundary of $U$ whose orbits eventually lie outside $T'_{p,q}$. In other words,
$$
   H'_{p,q} := \{ z \in \partial U : f^n(z) \notin T'_{p,q}, \text{ for all sufficiently large values of } n \}.
$$

Suppose that $p, q \isnatural$ are fixed. Since $\zeta'_{p,q}\in \Lambda(U,f)$, there exist $w \in U$ and an increasing sequence of positive integers $(q_n)_{n\isnatural}$ such that 
$$
f^{q_n}(w) \in \{ z \in \sphere : d(z,\zeta'_{p,q}) < \delta_p/2\}, \qfor n\isnatural.
$$ 

We again apply Lemma~\ref{bdrylemm}. Let $g_1 = f^{q_1}$ and, for $n \geq 2$, set $g_n = f^{q_n - q_{n-1}}$, so that
$$
g_n \circ \cdots \circ g_2 \circ g_1 = f^{q_n}, \qfor n\isnatural.
$$
Set $G_0 = U$ and, for each $n\isnatural$, let $G_n$ be the component of $F(f)$ containing $f^{q_n}(U)$.  Also set $z_0 = w$, $\xi = \zeta'_{p,q}$, $\rho = \delta_p/2$ and $c = 2$.  Then it follows from Lemma~\ref{bdrylemm} that $H'_{p,q}$ has harmonic measure zero relative to $U$, for all $p, q \isnatural$.

Now suppose that $z \in \partial U$, and that $\zeta' \in \Lambda(U,f)$ is such that $\zeta' \notin \Lambda(z,f)$. Since $\zeta' \notin \Lambda(z,f)$, there exists $\delta>0$ such that $d(f^n(z),\zeta') > \delta$ for all sufficiently large values of $n$. Thus, since $\delta_p\rightarrow 0$ as $p\rightarrow\infty$, there exist $p, q\isnatural$ such that $f^n(z) \notin T'_{p,q}$, for all sufficiently large values of $n$. We deduce that 
$$
\{ z \in \partial U : \Lambda(z,f) \not\supset \Lambda(U, f) \} \subset \bigcup_{p,q \isnatural} H'_{p,q},
$$
which has harmonic measure zero relative to $U$, by countable additivity.  This completes the proof.
\end{proof}

We next show that Theorem~\ref{TboundaryinBU} is a straightforward consequence of Theorem~\ref{Tboundaryingeneral}.
\begin{proof}[Proof of Theorem~\ref{TboundaryinBU}]
For part (a), suppose that $U \subset BU(f)$. Then $U$ is a wandering domain by Theorem~\ref{Tbasicresults}(a). Since $U \subset BU(f)$, there exists $\zeta\in\mathbb{C}$ such that $\{\zeta, \infty\} \subset \Lambda(U,f)$.  Hence it follows from Theorem~\ref{Tboundaryingeneral} that the set $$\{ z \in \partial U : \{\zeta, \infty\} \not\subset \Lambda(z,f) \}$$ has harmonic measure zero relative to $U$ and therefore, in particular, so does the set $\{ z \in \partial U : z \in BU(f)^c \}.$

For part (b) we prove the contrapositive. Suppose that $U$ is a wandering domain and that $U \not \subset BU(f)$. Then $ U \cap BU(f) = \emptyset $ by Theorem~\ref{Tbasicresults}(a), so $ U \subset I(f) $ or $ U \subset K(f). $  Now if $U \subset I(f)$, we have $\Lambda(U,f) = \{\infty\}$, so it follows from Theorem~\ref{Tboundaryingeneral} that $\Lambda(z,f) = \{\infty\}$ for all $z \in\partial U$ except at most a set of harmonic measure zero relative to $U$. On the other hand, if $U \subset K(f)$, then $\infty \notin \Lambda(U,f)$, and it follows from Theorem~\ref{Tboundaryingeneral} that $\infty \notin \Lambda(z,f)$, for all $z \in\partial U$ except at most a set of harmonic measure zero relative to $U$. In either case we deduce that $\partial U \cap BU(f)$ has harmonic measure zero relative to $U$, and the result follows.
\end{proof}

\begin{remarks}\normalfont
\begin{enumerate}[(1)]
\item Suppose that $f$ is a {\tef}, and that $U$ is a wandering domain of $f$. Using a similar argument to the proof of Theorem~\ref{TboundaryinBU}, it is easy to see that if $U\subset K(f)$, then $\partial U \cap K(f)^c$ has harmonic measure zero relative to $U$, and, in the other direction, that if $\partial U \cap K(f)$ has positive harmonic measure relative to $U$, then $U \subset K(f)$. We omit the details.

However, it is unknown whether wandering domains in $K(f)$ can exist for \tef s.  In \cite[Theorem 1.4]{2012arXiv1208.3692O}, the first author showed that such functions with the property of being \emph{strongly polynomial-like} have no wandering domains in $ K(f) $; we refer to \cite{2012arXiv1208.3692O} for the definition.\\
\item Our proof of Theorem~\ref{Tboundaryingeneral} allows for the possibility that the $\omega$-limit set $ \Lambda(U,f) $ of a wandering domain $ U $ of a \tef\ $ f $ could be an uncountable set.  We are unaware of any examples where this is the case, and it is an interesting question whether or not it can occur.  Our proof of Theorem~\ref{Tboundaryingeneral} could be simplified if it cannot. We  observe that Rempe-Gillen and Rippon \cite{exoticbaker} showed that there are holomorphic functions between \emph{Riemann surfaces} whose $\omega$-limit set is uncountable. However, in their examples the limit set is contained in the boundary of the Riemann surface, which in our context is a single point.
\end{enumerate}
\end{remarks}
%
%
%
%
%
%
%
%
\section{Hausdorff dimension of $BU(f)$}
\label{Sdimension}
In this section we show that a recent result in \cite{RempeUrbanski} enables us to draw conclusions about the Hausdorff dimension of $BU(f) $ that are similar to known results for $ K(f) $.  We denote the Hausdorff dimension of a set~$A$ by $\dim_H A$, and refer to \cite{falconer} for a definition and further information. 

Stallard \cite{MR1260113} used the Ahlfors five islands theorem to show that $\dim_H K(f) > 0$. A stronger result holds for functions in the \emph{Eremenko-Lyubich class}~$\mathcal{B}$, that is, those {\tef}s with a bounded set of singular values (a \emph{singular value} is, by definition, a critical value or a finite asymptotic value). For functions in this class, Stallard \cite{MR1357062} showed that $\dim_H J(f) > 1$, whilst Bara{\'n}ski, Karpi{\'n}ska and Zdunik \cite{MR2480096} showed that $\dim_H (K(f) \cap J(f))>1$.

For $BU(f)$ we have the following analogous result.
\begin{theorem}
\label{Tdim}
Suppose that $f$ is a {\tef}. Then:
\begin{enumerate}[(a)]
\item $\dim_H (BU(f) \cap J(f)) > 0.$\label{Tdima}
\item If, in addition, $f \in \mathcal{B}$, then $\dim_H (BU(f) \cap J(f)) > 1.$ \label{Tdimb}
\end{enumerate}
\end{theorem}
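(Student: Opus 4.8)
The plan is to reduce both parts to the cited results on $\dim_H K(f)$ (respectively $\dim_H(K(f)\cap J(f))$ for $f \in \mathcal{B}$) by means of the result in \cite{RempeUrbanski}. The key observation is that the Hausdorff dimension estimates of Stallard and of Bara\'nski--Karpi\'nska--Zdunik are in fact obtained by constructing a hyperbolic (expanding) invariant subset of $J(f)$ — for instance, using the Ahlfors five islands theorem one builds an iterated function system on which $f$ (or an iterate) acts as an expanding conformal map, and the associated limit set $\Lambda \subset J(f)$ carries the dimension bound. Such a limit set is a compact, forward-invariant, expanding Cantor set contained in $J(f)$, on which the dynamics is conjugate to a shift. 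The recent result in \cite{RempeUrbanski} should be precisely what allows one to control the dimension of the subset of $\Lambda$ whose orbits are additionally forced to be \emph{both} unbounded (so as to lie outside $K(f)$) and recurrent to a bounded part (so as to lie outside $I(f)$), hence in $BU(f)$.

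Concretely, I would first recall from \cite{RempeUrbanski} the statement that for a {\tef} $f$ (and, for part~(b), for $f\in\mathcal B$) one has $\dim_H(K(f)\cap J(f))$ realised by hyperbolic sets in an appropriate sense; more precisely, that the dimension is approximated by expanding invariant subsets of $J(f)$ built from a suitable iterated function system. The heart of the argument is then a \emph{two-target} construction analogous to the one used in the proof of Theorem~\ref{Tconnectnessandboundedness}(b): within the hyperbolic set one fixes a bounded ``piece'' $P_0$ of the IFS, and one uses the blow-up property (Lemma~\ref{Lblow}) together with the expansion to find, far out in the plane, further pieces $P_1, P_2, \dots$ with $P_j \cap J(f)\neq\emptyset$ and $|P_j| \to \infty$ (in the sense $\inf\{|z|:z\in P_j\}\to\infty$), such that the union of $P_0$ with all the $P_j$ still supports an expanding system. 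One then restricts to the subsystem consisting of all infinite itineraries that return to $P_0$ infinitely often and also visit infinitely many of the $P_j$; every point with such an itinerary has an orbit that is neither bounded nor escaping, hence lies in $BU(f)\cap J(f)$.

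The final step is to check that this restriction does not lower the dimension: adding finitely many extra generators to an IFS and then deleting the finitely many itineraries that eventually avoid $P_0$ or that are eventually bounded leaves a sub-self-conformal set whose dimension can be made arbitrarily close to that of the original hyperbolic set. This is where the precise form of the result in \cite{RempeUrbanski} is essential — one needs that the dimension of $K(f)\cap J(f)$ (resp.\ of $J(f)$, for $f\in\mathcal B$, combined with \cite{MR2480096}) is the supremum over hyperbolic subsets, and that on each such subset one has the full thermodynamic/Bowen machinery so that passing to the sub-IFS described above changes the pressure, and hence the dimension, by an arbitrarily small amount. I expect the main obstacle to be exactly this bookkeeping: ensuring that the ``escape to infinity along the $P_j$'' can be arranged \emph{inside} a hyperbolic set without destroying uniform expansion (the pieces $P_j$ live near $\infty$ where $f$ is large, so one must choose them, and the return times, carefully — using Lemma~\ref{Lblow} and the boundedness of singular values in the $\mathcal B$ case to keep branches well-separated), and then quantifying the dimension drop via a pressure estimate. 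Once that is in place, parts (a) and (b) follow by feeding in $\dim_H(K(f)\cap J(f))>0$ and $\dim_H(K(f)\cap J(f))>1$ respectively.
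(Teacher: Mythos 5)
Your strategy---realise the dimension bound on a hyperbolic set and then force orbits into $BU(f)$ without losing dimension---is in the right spirit, but as written it has a step that fails, and it misses the observation that reduces the whole theorem to a short citation. The failing step: you propose to adjoin pieces $P_1,P_2,\dots$ with $\inf\{|z|:z\in P_j\}\to\infty$ to an expanding system and still treat the result as a hyperbolic set supporting the usual thermodynamic machinery. A hyperbolic set is by definition compact and forward invariant, so no single hyperbolic set can contain points whose orbits accumulate at $\infty$; the construction you describe necessarily leaves the hyperbolic category and becomes a non-autonomous (Moran-type) conformal iterated function system with infinitely many, increasingly distant generators. Controlling the Hausdorff dimension of the limit set of such a system---your ``pressure bookkeeping''---is not a routine perturbation of the compact case; it is precisely the main technical content of \cite{RempeUrbanski}. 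Your sketch therefore defers essentially the entire proof to a construction you would still have to carry out.

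The missing idea that collapses all of this: a point whose orbit is \emph{dense} in $J(f)$ automatically lies in $BU(f)\cap J(f)$, since $J(f)$ is unbounded and the orbit must also return to any fixed bounded neighbourhood of a point of $J(f)$. The paper accordingly takes $J_d(f)$, the set of points with dense orbit in $J(f)$, notes $J_d(f)\subset BU(f)\cap J(f)$, and quotes \cite[Theorem 1.4]{RempeUrbanski}, which states that $\dim_H J_d(f)$ is at least the hyperbolic dimension of $f$. Your (correct) observation that the proofs of the cited dimension results in fact produce hyperbolic subsets of $K(f)\cap J(f)$ of dimension greater than $0$, respectively greater than $1$ for $f\in\mathcal B$ by \cite{MR2480096}, shows that the hyperbolic dimension satisfies the required lower bounds, and both parts follow with no new construction. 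So the ingredients you name are the right ones, but the two-target construction should be replaced by the dense-orbit observation together with the already-proved theorem of Rempe-Gillen and Urba\'nski.
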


We need the idea of the \emph{hyperbolic dimension} of a \tef\ $ f $.  This is defined to be the supremum of the Hausdorff dimensions of hyperbolic subsets of $ J(f) $, where $ K \subset J(f) $ is \emph{hyperbolic} if it is compact and forward invariant, and if sufficiently large iterates of $ f $ are expanding when restricted to $ K $.  We refer to \cite{RempeUrbanski} and references therein for more details.
  
\begin{proof}[Proof of Theorem \ref{Tdim}]
Suppose that $f$ is a {\tef}. Let $J_{d}(f)$ denote the set of points in $J(f)$ whose orbits are dense in $J(f)$. It is shown in \cite[Theorem 1.4]{RempeUrbanski} that the Hausdorff dimension of $J_{d}(f)$ is greater than or equal to the hyperbolic dimension of $f$. The proof of Stallard's result \cite{MR1357062} in fact shows that the Hausdorff dimension of a hyperbolic subset of $ K(f) \cap J(f) $ is greater than zero, so it follows that the hyperbolic dimension of $f$ is greater than zero. Since  $J_{d}(f) \subset BU(f) \cap J(f)$, this proves part (a).

Similarly, the proof in \cite{MR2480096} shows that, for $f \in \mathcal{B}$, the Hausdorff dimension of a hyperbolic subset of $ K(f) \cap J(f) $ is greater than one. Hence part (\ref{Tdimb}) of the theorem follows in the same way as part (\ref{Tdima}).
\end{proof}
\begin{remarks}\normalfont
\begin{enumerate}[(1)]
\item Bishop's construction in \cite{Bish2} shows that Theorem~\ref{Tdim}(a) is sharp. He shows that, given $ \alpha > 0$, there exists a {\tef} $f$ such that all Fatou components of $ f $ lie in a subset $A(f)$ of $I(f)$ known as the fast escaping set, and $\dim_H J(f) \backslash A(f) < \alpha$. \\
\item The above results on the Hausdorff dimension of $K(f) \cap J(f)$ and $BU(f) \cap J(f)$ stand in contrast to the situation for $I(f) \cap J(f)$. For a general \tef\ $ f $, it is known \cite{RS} that $I(f) \cap J(f)$ contains nondegenerate continua, so 
$
\dim_H (I(f) \cap J(f)) \geq 1.
$
Moreover, there is a function $f \in \mathcal{B}$ such that $\dim_H I(f) = 1$; see \cite[Theorem 1.1]{RGS}.\\
\item We are grateful to Lasse Rempe-Gillen for drawing our attention to Theorem~\ref{Tdim}(b), which strengthens a result we had proved earlier using the methods of \cite{MR3054344}. We also note that it is possible to prove Theorem~\ref{Tdim}(a) directly using the same technique as in \cite{MR1260113}.
\end{enumerate}
\end{remarks}
%
%
%
%
%
%
%
\section{Examples}
\label{S4}

In this section, we illustrate some of the different topological and dynamical structures that can occur for $ BU(f) $.  First, we give an example of a \tef\ for which $ BU(f) $ is totally disconnected.  

\begin{example}
\label{exampledisconn}
Let
\[ f(z) = \lambda e^z, \qfor \lambda \in (0, 1/e). \]
Then $BU(f)$ is totally disconnected.
\end{example}

\begin{proof}
It was shown by Devaney and Tangerman \cite{MR873428} that $F(f)$ is a completely invariant immediate attracting basin, and that $J(f)$ is a \emph{Cantor bouquet}, which consists of uncountably many disjoint curves, each with one finite endpoint and the other endpoint at infinity. It is well known that these curves, apart from some of the finite endpoints, lie in $I(f)$ (see \cite{MR1956142} and references therein). Mayer \cite{MR1053806} showed that the set of finite endpoints is totally disconnected. Since $ BU(f) $ is a subset of this set of endpoints, it follows that $BU(f)$ is totally disconnected.
\end{proof}

\begin{remarks}\normalfont
\begin{enumerate}[(1)]
\item The function $ f $ in Example \ref{exampledisconn} is in the Eremenko-Lyubich class $ \mathcal{B} $ \---\ indeed, it has only one singular value. It can be shown by an argument similar to that in \cite[Example 5.4]{2012arXiv1208.3692O} that $ BU(f) $ is also totally disconnected for the function $ f(z) = z + 1 + e^{-z} $, first studied by Fatou, which is not in the class~$ \mathcal{B} $.  We omit the details.\\
\item More generally, there are many \tef s for which $ BU(f) $ consists of uncountably many bounded components, though all components of $\overline{BU(f)}$ are unbounded.  In particular, this is the case whenever $ f $ has no \mconn\ Fatou components (recall Theorem~\ref{Tconnectnessandboundedness}(a)) and $ I(f) $ takes the form of a \sw\ \---\ we refer to \cite{Rippon01102012, DS1} for a discussion of the terminology used here and examples of functions of this type.  For some such functions, for instance the functions of small growth referred to in \cite[Examples 1 and 2]{pre05533139}, all components of $ BU(f) $ also have empty interior.
\end{enumerate}
\end{remarks}

%

Next, we show that there is a \tef\ for which $ BU(f) $ has uncountably many unbounded components.

\begin{example}
\label{exampleunc}
Let
\[ f(z) = \lambda e^z, \qfor \lambda > 1/e. \]
Then $BU(f)$ has uncountably many unbounded components, each with empty interior.
\end{example}

\begin{proof}
Since $J(f) = \mathbb{C}$ (see the survey article \cite{Dev2} and references therein), it is clear that all components of $BU(f)$ have empty interior. It remains to show that $BU(f)$ has uncountably many unbounded components.

Our argument is based on a construction due to Devaney \cite{MR1257026} and its extension in \cite{DevaneyJarque}.  We first describe Devaney's construction and use it to show that $ BU(f) $ has an unbounded component, and then indicate how the argument in \cite{DevaneyJarque} implies the existence of uncountably many such components.  We adopt the same notation as in \cite{MR1257026} for ease of reference. 

Devaney's construction begins with the closed horizontal strip, $$S = \{z : 0 \leq \operatorname{Im } z \leq \pi\}.$$
Since $f(\partial S) \subset \mathbb{R}$, it follows that $\partial S$ and all its preimages lie in $I(f)$.  Note that $f$ maps the interior of $S$ onto the upper half-plane, so some points in $ S $ are mapped outside $S$, while others are mapped inside $S$. 

Denote by $ \Lambda $ the set of points whose whole orbit lies inside $S$,
 $$\Lambda := \{ z \in S : f^n(z) \in S, \text{ for } n \isnatural\},$$
and by $L_n$ the set of points in $S$ that leave $S$ at the $n$th iteration of $f$,
$$
  L_n := \{z : f^k(z) \in S, \text{ for } 0 \leq k < n, \text{ and } f^n(z) \notin S\}, \qfor n\isnatural.
$$
Finally, let $B_n = \partial L_n$, for $n\isnatural$. It is easy to see that $B_n \subset \Lambda \cap I(f)$, for $n\isnatural$.  The reader may find it helpful to refer to the pictures of these sets given in \cite[Figure 3]{MR1257026} or \cite[Figure 1]{MR2599894}. 

%
%

For each $n\isnatural$, define $J_n = \bigcup_{k=n}^\infty B_k$. Devaney proves that each $J_n$ is dense in~$\Lambda$, and uses this fact to show that $\Lambda$ is connected.  He also shows \cite[p.632]{MR1257026} that, if $z \in \Lambda$, then exactly one of the following applies:
\begin{enumerate}
\item $z$ is a unique fixed point $p_\lambda$;
\item $z\in B_n$, for some $n\isnatural$, so $z \in I(f)$;
\item the $\omega$-limit set of $z$ is the orbit of zero plus the point at infinity.
\end{enumerate}

Next, Devaney compactifies $\Lambda$ by adjoining the backward orbit of zero. Roughly speaking, this is achieved by adding `points at infinity' that `join together' the ends of the curves $B_n$. The union of $\Lambda$ and these `points at infinity' is denoted by $\Gamma$, which is compact and connected and hence a \emph{continuum}.  Moreover, Devaney uses a result of Curry \cite[Theorem 8]{MR1108604} to deduce that $ \Gamma $ is an \emph{indecomposable} continuum \---\ that is, a continuum that cannot be written as the union of two proper subcontinua. We refer to \cite{MR1257026} for the full details. 

Now if $Z$ is a nondegenerate continuum and $x \in Z$, the \emph{composant} of $Z$ containing $x$ is defined by 
$$
   \{ y \in Z : \text{there is a proper subcontinuum } C \subset Z \text{ such that } x, y \in C \}.
$$
It is known \cite[Theorem 11.15 and Theorem 11.17]{MR1192552} that a nondegenerate indecomposable continuum has uncountably many composants and that these are pairwise disjoint. 

We consider the composants of $\Gamma$. If a composant $X$ of $\Gamma$ meets $B_n$ for some $n\isnatural$, then $B_n \subset X$ since $ B_n $ is an unbounded Jordan curve. Thus there are at most countably many composants of $ \Gamma $ that either intersect $B_n$ for some $n\isnatural$, or contain $p_\lambda$. Any other composant of $ \Gamma $, of which there are uncountably many, lies in $ BU(f) $ by (3) above. 

Now each composant of a continuum is dense in that continuum; see, for example, \cite[5.20]{MR1192552}. Thus every composant of $ \Gamma $ is unbounded, and we deduce that $BU(f)$ has at least one unbounded component.

In \cite{DevaneyJarque}, Devaney and Jarque strengthened the original result in \cite{MR1257026} by showing that there are, in fact, uncountably many indecomposable continua with properties similar to $\Gamma$; see, in particular, \cite[Theorem 5.1 and Theorem 6.1]{DevaneyJarque}. Using an argument similar to that above, it is easy to see that each of these indecomposable continua meets an unbounded component of $BU(f)$.

Suppose that $\Gamma_1 \neq \Gamma_2$ are two such indecomposable continua, meeting the unbounded components $ \beta_1 $ and $ \beta_2 $ of $BU(f)$ respectively. Now it follows from Devaney and Jarque's construction that there exists $n\isnatural$ such that $f^n(\Gamma_1)$ and $f^n(\Gamma_2)$ lie in different horizontal strips of the form $$\{ z : (2p-1)\pi < \operatorname{Im}(z)\leq (2p+1)\pi\}, \qfor p\in\mathbb{Z}.$$ Since the boundaries of these strips lie in $I(f)$, we deduce that $\beta_1 \ne \beta_2$, and hence that $BU(f)$ has uncountably many unbounded components, as claimed.
\end{proof}
\begin{remark}\normalfont
By a similar argument, it follows from \cite[Theorem 1.2]{Nonlanding} that $BU(f) $ has uncountably many unbounded components with empty interior for any function $ f $ in the exponential family whose singular value is on a dynamic ray or is the landing point of such a ray. We refer to \cite{Nonlanding} for definitions.
\end{remark}

Finally, we show that $ BU(f) $ can contain unbounded Fatou components.

\begin{example}
\label{exampleunbdfatou}
There exists a {\tef} $f$ such that $BU(f)$ contains an unbounded Fatou component.
\end{example}

\begin{proof}
We outline how Bishop's example of a \tef\ in the Eremenko-Lyubich class $ \mathcal{B} $ which has a wandering domain in $BU(f)$ \cite[Section 17]{Bish3} can easily be modified to give a function which also has an unbounded Fatou component in~$ BU(f) $.

For the convenience of the reader, we first give a brief description of Bishop's construction. For full details and definitions of terminology, we refer to \cite{Bish3}. Also useful is \cite[Section 3]{FGJ}, which discusses Bishop's example in depth and verifies certain points that are left to the reader in \cite{Bish3}. In particular, it follows from results in \cite{FGJ} that all the Fatou components in Bishop's example are bounded.

The paper \cite{Bish3} introduces \emph{quasiconformal folding}, a new technique for constructing \tef s with good control over geometry and singular values.  Starting from an infinite connected graph that satisfies certain geometric conditions, Bishop shows how to combine carefully chosen quasiconformal maps on the complementary components of the graph into a map that is continuous across the graph and quasiregular on the whole plane.  An entire function with similar properties to the quasiregular map is then obtained by the measurable Riemann mapping theorem.

The key result used in Bishop's example is \cite[Theorem 7.2]{Bish3}. Here, the complex plane is divided by a graph into domains known as \emph{R-components}, \emph{L-components} and \emph{D-components}, with certain quasiconformal maps defined in each. Subject to some technical constraints, for which we refer to \cite{Bish3}, these components and quasiconformal maps are as follows.  We denote the unit disc $B(0,1)$ by $\mathbb{D}$.
\begin{enumerate}[(1)]
\item R-components are unbounded. The required quasiconformal map on an R-component is the composition of a quasiconformal map to the right half-plane and another map, which in this case we can take to be $z \mapsto \cosh(z)$.  Note that R-components are the only components on which Bishop's new technique of quasiconformal folding is needed.
\item L-components are also unbounded, and share edges only with R-components. The required quasiconformal map on an L-component is the composition of a quasiconformal map to the left half-plane, the exponential map to $\mathbb{D}\backslash\{0\}$ and (optionally) a quasiconformal map from $\mathbb{D}$ to $\mathbb{D}$ that takes the origin to another point in $\mathbb{D}$. 
\item D-components are bounded, and share edges only with R-components. The required quasiconformal map on a D-component is the composition of a quasiconformal map to $\mathbb{D}$, a power map $z \to z^d$ and (optionally) a quasiconformal map from $\mathbb{D}$ to $\mathbb{D}$ that takes the origin to another point in $\mathbb{D}$. 
\end{enumerate}

Note that a graph consisting only of R-components is a tree, and the corresponding \tef\ has only two singular values, namely critical values at $ \pm 1. $  Adding D-components to the graph enables the introduction of critical points of any degree, and adding L-components enables the inclusion of finite asymptotic values.
  


The graph used in Bishop's example is symmetrical about both the real and imaginary axes, and does not use L-components. The construction is very delicate, in that the properties of the D-components depend on the function resulting from \cite[Theorem 7.2]{Bish3}. There are then further \emph{post hoc} adjustments to the construction. A key aspect of Bishop's argument is that any apparent circularity here can be controlled. We do not attempt to discuss this detail.

One R-component is the strip $$S_+ = \{ z = x + iy : x > 0, |y| < \pi / 2 \}.$$ The quasiconformal map in $S_+$ is the (in fact, conformal) map $z \mapsto \cosh(\lambda\sinh(z))$, where $\lambda \in \pi \N$ is chosen sufficiently large that the point $\frac{1}{2}$ tends to infinity along the real axis under iteration by $f$.

\begin{figure}[ht]
	\centering
	\includegraphics[width=12cm,height=9cm]{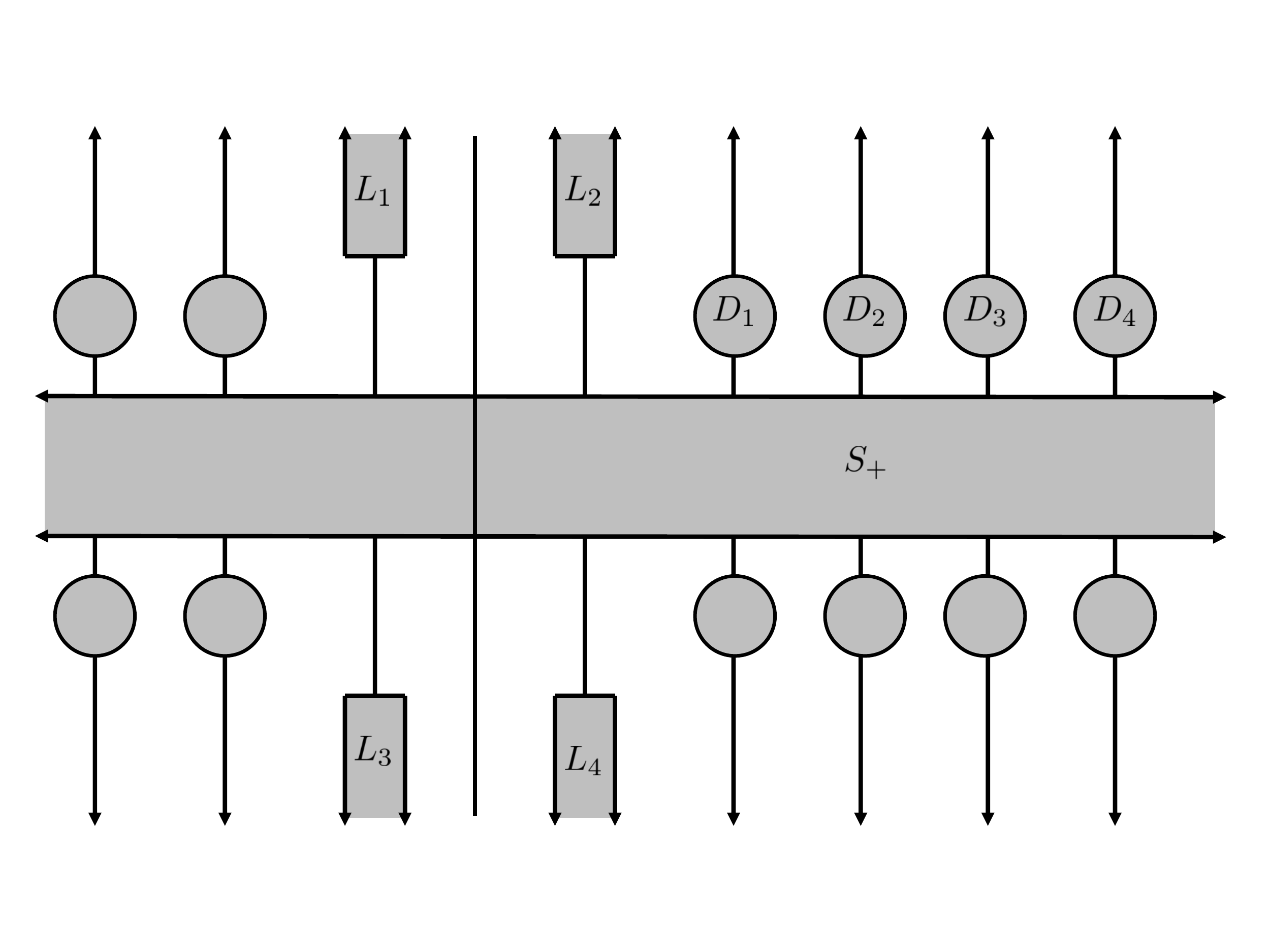}
	\caption{Sketch of the graph for our modification of Bishop's construction (replacing Bishop's Figure~41), showing the placement of the four additional L-components. The graph is not drawn to scale.}
  \label{fig41}
\end{figure}

The D-components are disjoint discs of unit radius, centred at points of imaginary part $\pm \pi$. The quasiconformal maps on these D-components are compositions of a translation to $\mathbb{D}$, a power map of high degree and a quasiconformal map that takes the origin to a point close to $\frac{1}{2}$. The positioning of the D-components, the degree of the power map and the choice of the point close to $\frac{1}{2}$ are all carefully controlled.

The remainder of the complex plane is divided into R-components, but since the dynamics in these components does not affect the example, the quasiconformal maps are not specified.

Choosing a small domain $U$ in $S_+$ close to $\frac{1}{2}$ and with positive imaginary part, it is shown that the iterates of $ U $ under $ f $ follow the orbit of $\frac{1}{2}$ until -- through careful choice of the location of the D-components -- the $n$th iterate (say) lands in a D-component. The quasiconformal map in this D-component is selected so as to reduce the diameter of $f^n(U)$ by a large factor (by using a power map of sufficiently high degree), and return it even closer to $\frac{1}{2}$. Subsequent iterates again follow the orbit of~$\frac{1}{2}$ but, because they start closer to this point, they do so for longer before landing in a D-component further from the origin.  Bishop shows that $U \subset F(f)$, and indeed that $ U $ is a wandering domain. It is easy to see that $U \subset BU(f)$.

Our only change to Bishop's construction is to add some extra components to his graph, and fix the behaviour of the function in these components. We do this in such a way that we add a further property to the dynamics of the function, without disturbing Bishop's construction of a wandering domain in $BU(f)$. 

Specifically we add four L-components to Bishop's graph \---\ see Figure~\ref{fig41} and compare \cite[Figure 41]{Bish3} (we use four L-components to preserve the symmetry of the graph). Note that this introduces four additional R-components, but these do not affect the construction.

In the L-component in the first quadrant, labeled $L_2$ in Figure~\ref{fig41}, we define the required quasiconformal map as the composition of a map to a left half plane, the exponential map to $\mathbb{D}\backslash\{0\}$ and a quasiconformal map from $\mathbb{D}$ to $\mathbb{D}$ that maps the origin to a point in the domain $U$ defined above. The rest of Bishop's construction is then followed without further amendment.

Since $U$ is a domain in $F(f) \cap BU(f)$ and since, by construction, $f$ has a finite asymptotic value in $U$, we deduce that $ f $  has an unbounded Fatou component in $BU(f)$, as claimed.
\end{proof}
%
%
%
%
%
%
\emph{Acknowledgment:} The authors are grateful to Gwyneth Stallard and Phil Rippon for their help with this paper, and also to Lasse Rempe-Gillen for suggesting Example~\ref{exampleunc} and a number of other helpful comments. Finally we are grateful to Anne Sixsmith for proposing the informal term \emph{bungee set} for $BU(f)$.
\bibliographystyle{acm}

\end{document}